\documentclass{article}
\usepackage[letterpaper]{geometry}

\usepackage{amsmath}
\usepackage{amssymb}
\usepackage{amsthm}
\usepackage{enumerate}
\usepackage{color}
\usepackage{url}

\allowdisplaybreaks[1]

\newtheorem{theorem}{Theorem}[section]
\newtheorem{corollary}[theorem]{Corollary}
\newtheorem{lemma}[theorem]{Lemma}
\newtheorem{example}[theorem]{Example}
\newtheorem{proposition}[theorem]{Proposition}
\newtheorem{definition}[theorem]{Definition}

\newcommand{\Z}{\mathbb{Z}}

\newcommand{\GF}{{\rm{GF}}}
\newcommand{\bone}{{\mathbf{1}}}

\long\def\symbolfootnote[#1]#2{\begingroup%
\def\thefootnote{\fnsymbol{footnote}}\footnote[#1]{#2}\endgroup}

\begin{document}

\title{Linking systems of difference sets}
\author{Jonathan Jedwab \and Shuxing Li \and Samuel Simon}
\date{14 August 2017 (revised 20 April 2018)}
\maketitle

\symbolfootnote[0]{
Department of Mathematics, 
Simon Fraser University, 8888 University Drive, Burnaby BC V5A 1S6, Canada.
\par
J.~Jedwab is supported by NSERC.
\par
Email: {\tt jed@sfu.ca}, {\tt shuxing\_li@sfu.ca}, {\tt ssimon@sfu.ca}
\par
The results of this paper form part of the Master's thesis of S. Simon \cite{simon-masters}, who presented them in part at the CanaDAM 2017 conference in Toronto, ON.
}

\begin{abstract}
A linking system of difference sets is a collection of mutually related group difference sets, whose advantageous properties have been used to extend classical constructions of systems of linked symmetric designs. 
The central problems are to determine which groups contain a linking system of difference sets, and how large such a system can be.
All previous constructive results for linking systems of difference sets are restricted to 2-groups.
We use an elementary projection argument to show that neither the McFarland/Dillon nor the Spence construction of difference sets can give rise to a linking system of difference sets in non-$2$-groups.
We make a connection to Kerdock and bent sets, which provides large linking systems of difference sets in elementary abelian $2$-groups.
We give a new construction for linking systems of difference sets in 2-groups, taking advantage of a previously unrecognized connection with group difference matrices. This construction simplifies and extends prior results, producing larger linking systems than before in certain 2-groups, new linking systems in other 2-groups for which no system was previously known, and the first known examples in nonabelian groups.
\end{abstract}

\section{Introduction}
\label{sec:introduction}

\subsection{Difference sets}
\label{subsec:diffsets}

The study of difference sets lies at the intersection of combinatorics, finite geometry, and coding theory \cite{jungnickel-survey}, \cite{jungnickel-survey-update}, \cite{jungnickel-survey-update2}.
The advantageous structural properties of difference sets enable the solution of problems in radar, optical image alignment, and other areas of digital communication~\cite{unify}. 
Difference sets occur within the larger context of the theory of experimental design: a difference set in a group $G$ is equivalent to a symmetric design with a regular automorphism group~$G$~\cite{lander}. 

\begin{definition}
Let $G$ be a group of order $v$, written multiplicatively, and let $D$ be a subset of $G$ with $k$ elements. Then $D$ is a $(v,k,\lambda,n)$-\emph{difference set in $G$} if the multiset $\{d_1 d_2^{-1}: d_1, d_2 \in D \text{ and } d_1 \ne d_2 \}$ contains every non-identity element of $G$ exactly $\lambda$ times. In this case, we define $n:=k- \lambda$.  
\end{definition}

The central problems are to determine which groups contain a difference set, and to enumerate all inequivalent examples in such groups. The cases $k=0$ and $k=1$ are considered trivial, and by taking the complement of a difference set if necessary we may assume that $k \le v/2$. 
We shall be concerned with the following three parameter families
$(v, k, \lambda, n)$ of difference sets,  where $q$ is a prime power, $d$ is a nonnegative integer, and $N$ is a positive integer.

\begin{center}
\begin{tabular}{c | c | c | c | c  }
Family& $v$&$k$&$\lambda$&$n$\\
\hline\\[-4mm]
McFarland & $q^{d+1} \left( \frac{q^{d+1}-1}{q-1}+1\right) $& $ q^d \left(\frac{q^{d+1}-1}{q-1} \right)$ &
 $q^d \left(\frac{q^{d}-1}{q-1} \right)$ &$ q^{2d} $\\[2mm]
Spence & $3^{d+1} \left(\frac{3^{d+1}-1}{2} \right)$&$ 3^d \left(\frac{3^{d+1}+1}{2} \right)$&$ 3^d \left(\frac{3^{d}+1}{2} \right)$&$ 3^{2d} $ \\[2mm]
Hadamard &  $4N^2$&$ N(2N-1)$&$N(N-1)$&$N^2$  
\end{tabular}
\end{center}

The McFarland parameters with $q=2$ are the same as the Hadamard parameters with $N=2^d$, and the corresponding difference sets occur in $2$-groups. 
Theorem~\ref{2grp} shows that the parameters of all (nontrivial) difference sets in $2$-groups must take this common form.

\begin{theorem}[{\cite[Chapter II, Theorem~3.17]{bjl2}}]\label{2grp}
Suppose a group $G$ of order $2^{r}$ contains a $(v,k,\lambda, n)$-difference set where $2 \le k \le \frac{v}{2}$. Then $r=2d+2$ for some $d \ge 0$ and 
\begin{equation*}
(v,k, \lambda,n) = \left( 2^{2d+2}, 2^d(2^{d+1}-1), 2^d(2^d-1),2^{2d} \right).
\end{equation*}
\end{theorem}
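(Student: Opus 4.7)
The plan is to extract the Hadamard parameters from the standard difference set identities using 2-adic divisibility together with a sign-character argument forcing $n$ to be a perfect square.

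First, I would start from the standard identity $k(k-1) = \lambda(v-1)$, which together with $n = k - \lambda$ yields $n(v-1) = k(v-k)$. Since $v = 2^r$, the factor $v-1$ is odd, so matching 2-adic valuations gives $v_2(n) = v_2(k) + v_2(v-k)$. Writing $k = 2^a m$ with $m$ odd and $0 \le a < r$ (the strict inequality coming from $k \le v/2 < v$), we have $v - k = 2^a(2^{r-a} - m)$ with $2^{r-a} - m$ odd, so $v_2(v-k) = a$ and hence $v_2(n) = 2a$.

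Second, I would show $n$ is a perfect square. Since $G$ is a nontrivial $2$-group, its abelianization is a nontrivial abelian $2$-group, so $G$ admits a surjective homomorphism $\chi \colon G \to \{\pm 1\}$. Extending $\chi$ linearly to $\mathbb{Z}[G]$ and applying it to the group-ring identity $D D^{(-1)} = n \cdot 1_G + \lambda \sum_{g \in G} g$ (where $D^{(-1)} = \sum_{d \in D} d^{-1}$), I obtain $\chi(D)^2 = n$, using $\sum_{g \in G} \chi(g) = 0$ for the nontrivial character $\chi$. Since $\chi(D)$ is an integer, $n$ is a perfect square; combining this with the first step, $n = 2^{2a} t^2$ for some odd positive integer $t$.

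Substituting back yields the Diophantine equation $m(2^{r-a} - m) = t^2(2^r - 1)$, subject to $m$ odd, $t$ odd, and $m \le 2^{r-a-1}$ (from $k \le v/2$). The final and most delicate step is to force $t = 1$ and $r = 2a+2$, whence $m = 2^{a+1} - 1$ and the claimed parameters follow with $d = a$. I would treat the equation as a quadratic in $m$, so its discriminant $2^{2(r-a)} - 4t^2(2^r - 1)$ must be a perfect square, and rearrange it as $(m - t)(m + t) = 2^{r-a}(m - 2^a t^2)$. When $a \ge 1$ the right side has $2$-adic valuation exactly $r - a$, while on the left $m \pm t$ are both even with valuations $1$ and $r - a - 1$ in some order (since $(m-t) + (m+t) = 2m$ has valuation $1$). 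This parametrization, together with the bound $m \le 2^{r-a-1}$, squeezes $t$ and forces $r = 2a + 2$. The main obstacle is this Diophantine endgame, in particular ruling out all $t > 1$ and separately handling the case $a = 0$ (where $k$ itself is odd).
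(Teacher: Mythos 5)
The paper does not prove this statement---it is quoted from Beth--Jungnickel--Lenz \cite[Chapter~II, Theorem~3.17]{bjl2} and used as a black box---so there is no internal proof to compare against; I can only judge your argument on its own merits, and it is essentially sound. The identity $n(v-1)=k(v-k)$ is correct; the $2$-adic step giving $v_2(n)=2v_2(k)=2a$ is correct (note $a\le r-1$, so $2^{r-a}-m$ is indeed odd); and the sign-character argument is valid, since a nontrivial finite $2$-group always surjects onto $\{\pm1\}$, the linear extension of $\chi$ to $\Z[G]$ is a ring homomorphism, $\chi(G)=0$, and $\chi(D^{(-1)})=\chi(D)$ because $\chi$ takes values $\pm1$. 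The one thing your sketch of the endgame is missing is an explicit upper bound on $t$, without which the ``squeeze'' does not close: from $m\le 2^{r-a-1}$ you get $m(2^{r-a}-m)\le 2^{2(r-a-1)}$, hence $t^2\le 2^{2(r-a-1)}/(2^r-1)<2^{r-2a-1}$. With that bound, the rest works as you indicate. For $a\ge 1$ the cofactor $m-2^at^2$ is odd and nonzero, so exactly one of $m\pm t$ has valuation $1$ and the other valuation $r-a-1$; the bound on $t$ forces the large valuation onto $m+t$ (since $|m-t|<2^{r-a-1}$ unless $m=t$, which leads to $k=1$) and then forces $m+t=2^{r-a-1}$ exactly, giving $2^at^2=2^{r-a-2}$, hence $t=1$, $r=2a+2$, $m=2^{a+1}-1$, and the stated parameters with $d=a$. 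The case $a=0$ is genuinely different, as you note (the cofactor $m-t^2$ is then even), but the same bound on $t$ shows it collapses to $m=t=1$, i.e.\ $k=1$, excluded by hypothesis. So: correct approach, one quantitative lemma to add, and no divergence from the paper to report since the paper offers no proof.
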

Theorem~\ref{NecSuf} gives necessary and sufficient conditions for the existence of a difference set in an abelian $2$-group.
(The \emph{exponent} of a group $G$ with identity $\bone_G$ is the smallest positive integer $\alpha$ for which $g^\alpha=\bone_G$ for all $g \in G$, and is written $\exp(G)$.)
\begin{theorem}[Kraemer \cite{kraemer}] \label{NecSuf}
A difference set exists in an abelian group $G$ of order $2^{2d+2}$ if and only if $\exp(G) \le 2^{d+2}$.
\end{theorem}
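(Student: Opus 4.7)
The plan is to prove the two directions of this equivalence separately. For the necessity direction, suppose $D \subseteq G$ is a difference set with parameters $(2^{2d+2}, 2^d(2^{d+1}-1), 2^d(2^d-1), 2^{2d})$. Viewing $D$ as an element of the integral group ring $\Z[G]$, the defining identity reads $D D^{(-1)} = 2^{2d} \bone_G + \lambda \sum_{g \in G} g$, so applying any nontrivial character $\chi$ of $G$ yields $|\chi(D)|^2 = 2^{2d}$. Assuming for contradiction that $\exp(G) \ge 2^{d+3}$, I would choose $\chi$ of order exactly $2^{d+3}$, so that $\chi(D)$ is a cyclotomic integer of absolute square $2^{2d}$ in $\Z[\zeta]$ with $\zeta$ a primitive $2^{d+3}$-th root of unity. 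Since $2$ is totally ramified in $\mathbb{Q}(\zeta)/\mathbb{Q}$ with ramification index $2^{d+2}$, the equation $\chi(D)\overline{\chi(D)} = 2^{2d}$ forces $(\chi(D))$ to be an explicit power of the prime above $2$; combining this with Galois invariance (the conjugate characters $\chi^t$ for odd $t$ all satisfy $|\chi^t(D)|^2 = 2^{2d}$) and reducing modulo a suitable power of $1-\zeta$ leads to a numerical incompatibility with $|D|=k$. This is Turyn's classical exponent-bound argument, and it yields the required inequality $\exp(G) \le 2^{d+2}$.

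For the sufficiency direction, the plan is to construct explicitly a difference set in every abelian group $G$ of order $2^{2d+2}$ with $\exp(G) \le 2^{d+2}$, proceeding by induction on $d$. The base case $d=0$ is trivial, since every abelian group of order $4$ admits a $(4,1,0,1)$-difference set (any singleton). For the inductive step I would decompose $G$ as a direct product of smaller abelian 2-groups whose exponents inherit bounds from $\exp(G) \le 2^{d+2}$, and then assemble a difference set in $G$ by applying a composition-type construction to a difference set (or a related family of partial difference structures) in a smaller factor. The natural tools are Turyn's Menon--Hadamard composition and, more flexibly, the framework of covering extended building sets developed later by Davis and Jedwab, selected to match the invariant-factor type of $G$.

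The main obstacle is the sufficiency direction. While necessity reduces to a clean cyclotomic-ramification computation, producing an explicit difference set in \emph{every} isomorphism type of abelian 2-group meeting the exponent bound requires either Kraemer's original recursive construction in \cite{kraemer} or a case-by-case treatment by invariant factors, each of which must be verified to preserve the exponent bound through the induction. I therefore expect the bulk of the work in a self-contained proof to lie on the sufficiency side, specifically in setting up a composition principle flexible enough to reach all permissible invariant-factor decompositions simultaneously, rather than in the relatively uniform necessity argument.
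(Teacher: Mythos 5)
This statement is imported into the paper as a known classical result (attributed to Kraemer, with the necessity direction due originally to Turyn), and the paper offers no proof of it at all, so there is no internal argument to compare yours against. Judged on its own terms, your proposal correctly identifies the two standard ingredients: Turyn's self-conjugacy/ramification argument for the exponent bound, and Kraemer's recursive construction (or the later covering extended building set machinery of Davis and Jedwab) for existence. As a roadmap it is historically and mathematically accurate.

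However, as a proof it has a genuine gap in both directions, and especially in the second. For necessity, you set up the character-sum identity $|\chi(D)|^2 = 2^{2d}$ and the total ramification of $2$ in $\mathbb{Q}(\zeta_{2^{d+3}})$ correctly, but the decisive step --- extracting from $(\chi(D)) = (1-\zeta)^{d\cdot 2^{d+2}}$ the conclusion that $\chi(D) = 2^d$ times a root of unity, and then deriving the contradiction with $k = 2^d(2^{d+1}-1)$ by summing over a coset or reducing modulo $(1-\zeta)^{2^{d+2}+1}$ --- is only asserted (``leads to a numerical incompatibility''), not carried out; this is exactly where Turyn's argument does its work. For sufficiency, you candidly defer to ``Kraemer's original recursive construction'' or to covering extended building sets, but that direction \emph{is} the theorem: it was an open conjecture for nearly thirty years precisely because no composition principle was known that reaches every invariant-factor type satisfying $\exp(G)\le 2^{d+2}$ (naive product constructions fail to control the exponent). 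Naming the tools without exhibiting the recursion and verifying that it covers all admissible decompositions leaves the substantive content unproved. So the proposal is a correct plan but not a proof; to make it one you would need to supply the cyclotomic endgame for necessity and an actual inductive construction for sufficiency.
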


By a common notation convention, we identify a multiset of elements of the group $G$ with its corresponding element in the group ring $\Z[G]$.
Given a multiset $S$ of elements of the group $G$, we write $S^{(-1)}$ for the group ring element $\sum_{s\in S} s^{-1}$, where the sum is over the elements in the multiset $S$ and the inverse is taken in $G$. 
The following result is then a direct consequence of the definition of a difference set and the relation $n=k- \lambda$. 
\begin{lemma}\label{DsetProp}
Let $G$ be a group of order $v$ and $D$ a subset of $G$ with $k$ elements. Then $D$ is a $(v,k,\lambda,n)$-difference set in $G$ if and only if
\begin{align}\label{Dset}
DD^{(-1)} =n \bone_G+ \lambda G \quad \text{ \rm{in} }\Z[G].
\end{align}
\end{lemma}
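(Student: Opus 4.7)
The plan is to prove the equivalence by computing, element by element, the coefficients in the group-ring product $DD^{(-1)}$ and matching them with the coefficients on the right-hand side of \eqref{Dset}. Expanding the product directly gives
\[
DD^{(-1)} = \sum_{d_1 \in D} \sum_{d_2 \in D} d_1 d_2^{-1} = \sum_{g \in G} c_g \cdot g,
\]
where $c_g = |\{(d_1,d_2) \in D \times D : d_1 d_2^{-1} = g\}|$ for each $g \in G$. So the whole task reduces to interpreting $c_g$ for $g = \bone_G$ and $g \neq \bone_G$ separately.

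First I would handle $g = \bone_G$: the equation $d_1 d_2^{-1} = \bone_G$ holds exactly when $d_1 = d_2$, contributing one pair for each of the $k$ elements of $D$, so $c_{\bone_G} = k$. Next, for $g \neq \bone_G$, any contributing pair must satisfy $d_1 \neq d_2$, so $c_g$ equals precisely the multiplicity of $g$ in the multiset $\{d_1 d_2^{-1} : d_1, d_2 \in D, \, d_1 \neq d_2\}$ from the definition of a difference set. Comparing with the right-hand side of \eqref{Dset}, which has coefficient $n + \lambda$ on $\bone_G$ and coefficient $\lambda$ on every other group element, the identity \eqref{Dset} is equivalent to the simultaneous conditions $k = n + \lambda$ and ``$c_g = \lambda$ for all $g \neq \bone_G$'', which is exactly the difference-set definition together with the relation $n = k - \lambda$. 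There is no real obstacle here: the argument is bookkeeping in $\Z[G]$, and the only care needed is to separate the diagonal contribution $d_1 = d_2$ from the off-diagonal one so that the $n \bone_G$ and $\lambda G$ terms are correctly accounted for.
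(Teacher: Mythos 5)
Your proposal is correct and is exactly the coefficient-counting argument that the paper leaves implicit when it calls the lemma ``a direct consequence of the definition of a difference set and the relation $n = k - \lambda$'': you separate the diagonal pairs (giving coefficient $k = n + \lambda$ on $\bone_G$) from the off-diagonal pairs (giving the multiset of differences), and both directions of the equivalence follow. No issues.
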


\subsection{Linking systems of difference sets}

Linking systems of difference sets were introduced by Davis, Martin, and Polhill~\cite{davis-martin-polhill}. Such a system gives rise to a system of linked symmetric designs, as introduced by Cameron \cite{cameron-doubly} and studied by Cameron and Seidel~\cite{cameron-seidel}, and is equivalent to a 3-class Q-antipodal cometric association scheme~\cite{vandam}. 
Kodalen \cite{kodalen} recently constructed the first known examples of systems of linked symmetric designs in non-$2$-groups, but it remains an important open question as to whether linking systems of difference sets can exist in non-$2$-groups.
 
\begin{definition}\label{link}
Let $G$ be a group of order $v$, written multiplicatively, and let $ \ell \ge 2$. Suppose $\mathcal{L}= \{D_{i,j} : 0 \le i,j  \le  \ell \text{ and }i \ne j \}$ is a collection of size $\ell (\ell+1)$ of $(v,k,\lambda,n)$-difference sets in $G$. Then $\mathcal{L}$ is a $(v,k,\lambda,n; \ell+1)$\emph{-linking system of difference sets in $G$} if there are integers $\mu, \nu$ such that for all distinct $i,j,h$, the following equations hold in $\Z[G]$:
\begin{align}
D_{h,i} D_{i,j} &= (\mu-\nu) D_{h,j} + \nu G \label{Cond1} \\
 D_{i,j} &= D_{j,i}^{(-1)} .\label{Cond2}
\end{align}
\end{definition}
The central problems are to determine which groups contain a linking system of difference sets, and how large such a system can be. 
Definition \ref{link} is rather cumbersome. We define a simpler object in Definition \ref{reduced} and show in Proposition \ref{equiv} (whose proof we postpone until Section~\ref{sec:equiv}) that it is equivalent to a linking system of difference sets. 
\begin{definition}\label{reduced}
Let $G$ be a group of order $v$, written multiplicatively, and let $\ell \ge 2$. Suppose $\mathcal{R}=\{D_1, D_2, \cdots, D_\ell \}$ is a collection of size $\ell$ of $(v,k, \lambda,n)$-difference sets in $G$. Then $\mathcal{R}$ is a \emph{reduced $(v,k,\lambda,n; \ell)$-linking system of difference sets in $G$ of size $\ell$} if there are integers $\mu,\nu$ such that for all distinct $i,j$ there is some $(v,k,\lambda,n)$-difference set $D(i,j)$ in $G$ satisfying
\begin{align}\label{LinkingProperty}
D_iD_j^{(-1)}= (\mu- \nu) D(i,j) + \nu G \quad \text{ in } \Z[G].
\end{align}
\end{definition}
Note that the difference set $D(i,j)$ in \eqref{LinkingProperty} is not necessarily contained in the collection~$\mathcal{R}$. 
Note also that $\{D_1, D_2, \dots, D_\ell\}$ is a reduced $(v,k,\lambda,n;\ell)$-linking system of difference sets in $G$ with respect to $\mu,\nu$ if and only if $\{G-D_1, G-D_2, \dots, G-D_\ell\}$ is a reduced $(v,v-k,v-2k+\lambda,n;\ell)$-linking system of difference sets in $G$ with respect to $v-2k+\nu,\,v-2k+\mu$, so we may assume that $k \le v/2$.

\begin{proposition}\label{equiv}
Let $\mu, \nu$ be integers. 
A $(v,k,\lambda, n;\ell+1)$-linking system of difference sets in a group $G$ with respect to $\mu, \nu$ is equivalent to a reduced $(v,k, \lambda,n; \ell)$-linking system of difference sets in $G$ with respect to $\mu, \nu$. 
\end{proposition}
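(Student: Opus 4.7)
The plan is to exhibit natural constructions in both directions (with the same $\mu,\nu$) and verify the defining axioms on each side. For the forward direction I would set $D_i := D_{i,0}$ for $1 \le i \le \ell$; then for distinct $i,j$, applying \eqref{Cond2} to rewrite $D_j^{(-1)} = D_{0,j}$ and then \eqref{Cond1} with middle index $0$ gives $D_i D_j^{(-1)} = D_{i,0} D_{0,j} = (\mu-\nu) D_{i,j} + \nu G$, so $D(i,j):=D_{i,j}$ (already a difference set by hypothesis) satisfies \eqref{LinkingProperty}.

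For the reverse direction, given $\mathcal{R} = \{D_1,\ldots,D_\ell\}$ I would define $D_{i,0}:=D_i$, $D_{0,i}:=D_i^{(-1)}$, and $D_{i,j}:=D(i,j)$ for $1 \le i,j \le \ell$ with $i \ne j$, and must verify \eqref{Cond1} and \eqref{Cond2} for the resulting collection. A first step is to derive two parameter identities purely from Definition~\ref{reduced}: multiplying \eqref{LinkingProperty} by its own group-ring inverse and applying Lemma~\ref{DsetProp} to both $D_i$ and $D(i,j)$ yields $n = (\mu-\nu)^2$; comparing coefficient sums in \eqref{LinkingProperty} together with $k^2 = n + \lambda v$ yields $(\lambda-\nu) k = (\mu-\nu)\nu$. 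In particular $\mu \ne \nu$, since $n>0$ for a nontrivial difference set. Condition \eqref{Cond2} is then immediate when $0 \in \{i,j\}$, and for $i,j \ge 1$ it follows by inverting \eqref{LinkingProperty} (using $G^{(-1)}=G$) and canceling $\mu-\nu$.

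For \eqref{Cond1} with distinct $h,i,j$, there are four cases according to which (if any) of the indices is $0$. The case $i = 0$ is exactly \eqref{LinkingProperty}. The cases $h = 0$ and $j = 0$ reduce to \eqref{LinkingProperty} after left- or right-multiplying by $D_i^{(-1)}$ or $D_i$ respectively and applying Lemma~\ref{DsetProp}, the identity $D_i G = kG$, and the two parameter relations above. The main obstacle is the remaining case, where $h,i,j$ are all nonzero and the required identity $D(h,i) D(i,j) = (\mu-\nu) D(h,j) + \nu G$ has no counterpart in Definition~\ref{reduced}. My plan here is to expand $D_h D_i^{(-1)} D_i D_j^{(-1)}$ in two ways: internally via Lemma~\ref{DsetProp} applied to $D_i^{(-1)}D_i$ and then \eqref{LinkingProperty} applied to $D_h D_j^{(-1)}$; and externally by substituting \eqref{LinkingProperty} into each of the two outer factors and multiplying out, using $D(h,i) G = G D(i,j) = kG$ and $G^2 = vG$. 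Equating the two expressions, the identities $n = (\mu-\nu)^2$ and $(\lambda-\nu)k = (\mu-\nu)\nu$ collapse the $G$-coefficients on both sides to the same value, leaving $(\mu-\nu)^2 D(h,i) D(i,j) = (\mu-\nu)^2[(\mu-\nu) D(h,j) + \nu G]$; canceling $(\mu-\nu)^2$ yields the required identity, and since each $D(i,j)$ is a difference set by hypothesis the constructed collection is a genuine $(v,k,\lambda,n;\ell+1)$-linking system.
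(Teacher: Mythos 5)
Your proposal is correct and follows essentially the same route as the paper: the same assignments in both directions, the same inversion argument for \eqref{Cond2}, and the same two-way expansion of $D_h D_i^{(-1)} D_i D_j^{(-1)}$ (relying on $n=(\mu-\nu)^2$ and the collapse of the $G$-coefficients) for the all-nonzero case of \eqref{Cond1}. The only differences are organizational — you derive the parameter identities up front rather than via the paper's separate Lemma~\ref{MuNuReduced} and its $h=0$ case — and you should note explicitly that applying Lemma~\ref{DsetProp} to $D_i^{(-1)}D_i$ requires Proposition~\ref{straightforward} when $G$ is nonabelian.
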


A difference set $D$ satisfying $D=D^{(-1)}$ is called \emph{reversible}.  

\begin{example}[{\cite[Example~6.3]{davis-martin-polhill}}]\label{FromDMP}
Let $G= \Z_4^2= \langle x,y \rangle$ and let
$D_1 = x + x^3y + y^3 + x^3 + xy^3 + y $, 
$D_2 = x + x^3y + y^3 + xy^2 + xy + x^2y $, 
$D_3 = x + x^3y + y^3 + x^2y^3 + x^3y^3 + x^3y^2$.
For each $i$ we have $D_iD_i^{(-1)}= 4 \cdot \bone_G+2G$, so $D_i$ is a difference set in~$G$ by Lemma~\ref{DsetProp}. Furthermore
\[
D_2 D_1^{(-1)} = -2D+ 3G,
\]
where $D= y^3+x+x^2y^3+x^3y+x^3y^2+x^3y^3$ is a $(16,6,2,4)$-difference set in $G$. Similar calculation for $D_i D_j^{(-1)}$ for each distinct $i, j$ shows that $\{D_1,D_2,D_3 \}$ forms a reduced $(16,6,2,4;3)$-linking system of difference sets in $G$. 
The difference set $D_1$ is reversible, but neither $D_2$ nor $D_3$ is. 
\end{example}

\begin{definition}
Suppose $\mathcal{L}= \{D_{i,j} : 0 \le i,j  \le \ell \text{ and }i \ne j \}$ is a $(v,k,\lambda,n; \ell+1)$-linking system of difference sets in a group $G$. If each difference set $D_{i,j}$ is reversible, then $\mathcal{L}$ is a \emph{reversible $(v,k,\lambda,n; \ell+1)$-linking system of difference sets in $G$}. 
\end{definition}

\begin{definition}
Suppose $\mathcal{R}=\{D_1, D_2, \cdots, D_\ell \}$ is a reduced $(v,k,\lambda,n; \ell)$-linking system of difference sets in a group $G$. If the corresponding linking system $\mathcal{L}$ (as defined in the proof of Proposition \ref{equiv}) is reversible, then $\mathcal{R}$ is a \emph{reversible reduced $(v,k,\lambda,n; \ell)$-linking system of difference sets in $G$}.
\end{definition}

\subsection{Results due to Davis-Martin-Polhill}\label{DMPresults}

Davis, Martin, and Polhill \cite{davis-martin-polhill} provide one of the two principal references on linking systems of difference sets. All their results construct reduced linking systems of difference sets in abelian $2$-groups, and all their examples are reversible except for the one presented here as Example~\ref{FromDMP}.

The main result of \cite{davis-martin-polhill} depends on several theorems: 
a base construction \cite[Theorem~5.3]{davis-martin-polhill} for a reversible reduced linking system of difference sets from partial difference sets having intricate mutual properties; 
a product construction \cite[Theorem~3.1]{davis-martin-polhill} for combining two reversible reduced linking systems of Hadamard difference sets into a larger one;
and a construction for a reversible reduced linking system of difference sets 
using Galois rings \cite[Theorem~4.6]{davis-martin-polhill}. These are combined to give the following result (in which we have corrected some typographical errors and oversights in \cite{davis-martin-polhill} following private communication with the authors).
Note from Theorem \ref{2grp} that the parameters of a difference set in a $2$-group are determined by the order of the group. 

\begin{theorem}[Davis, Martin, and Polhill {\cite[Corollary~5.5]{davis-martin-polhill}}]\label{thm-dmp}
Let $G=\Z_{2^{a_1}}^{2 b_1} \times \cdots \times \Z_{2^{a_k}}^{2 b_k}$ for integers $a_i,b_i$ satisfying $a_i \ge 1$ and $b_i \ge 2$, and let $b \ge 2$. Then the groups below contain a reversible reduced linking system of the specified size.
\begin{center}
\begin{tabular}{c | c }
\rm{Group} & \rm{Size}  \\
\hline\\[-3mm]
$G$ & $2^{\min(b_1,b_2, \dots ,b_k)}-1$  \\[1mm]
$\Z_4^b$ & $2^{b}-1$ \\[2mm]
$G \times \Z_4^b$ & $2^{\min(b_1,b_2, \dots ,b_k,b)}-1$ 
\end{tabular}
\end{center}
\end{theorem}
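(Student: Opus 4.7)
The plan is to assemble the theorem from the three auxiliary constructions cited in \cite{davis-martin-polhill}: the base construction from partial difference sets (Theorem~5.3), the Galois ring construction (Theorem~4.6), and the product construction (Theorem~3.1). The first two produce reversible reduced linking systems in particular ``atomic'' groups, while the third combines them across direct products.

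First, I would apply the Galois ring construction to each factor $\Z_{2^{a_i}}^{2b_i}$ of $G$, viewing $\Z_{2^{a_i}}^{2b_i}$ as the additive group of the Galois ring $\GF R(2^{a_i},2b_i)$. This yields a reversible reduced linking system of Hadamard difference sets of size $2^{b_i}-1$ in that factor; the hypothesis $b_i \geq 2$ ensures that each such system has size at least $3 \geq 2$, as required by Definition~\ref{reduced}. Next, I would use the fact that any subcollection $\{D_{i_1},\ldots,D_{i_m}\}$ of a reduced linking system $\{D_1,\ldots,D_\ell\}$ (with respect to fixed $\mu,\nu$) is itself a reduced linking system with the same parameters, and truncate each of the $k$ Galois ring systems to the common size $\ell := 2^{\min(b_1,\ldots,b_k)}-1$.

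I would then iterate the product construction $k-1$ times on these truncated systems, obtaining a reversible reduced linking system of size $\ell$ in $G = \prod_i \Z_{2^{a_i}}^{2b_i}$. This establishes the first row of the table. For the second row, I would instead apply the partial difference set base construction, which in $\Z_4^b$ yields a reversible reduced linking system of size $2^b-1$. For the third row, one further application of the product construction---after truncating both inputs to the common size $2^{\min(b_1,\ldots,b_k,b)}-1$---combines the systems used for the first two rows.

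The main obstacle is ensuring that the product construction can actually be invoked at each step: Theorem~3.1 of \cite{davis-martin-polhill} requires its two input reversible reduced linking systems to be compatible, sharing the same size and a prescribed relation between their $(\mu,\nu)$ pairs (which transform in a specific way under the Hadamard rescaling $N_1, N_2 \mapsto 2N_1 N_2$ dictated by the product of group orders). Verifying that both the Galois ring construction and the partial difference set base construction produce systems in this normalized form, and that truncation and iterated direct product preserve the form, is the step that demands the most care; once this is checked, the rest is routine bookkeeping following the template of \cite[Corollary~5.5]{davis-martin-polhill}.
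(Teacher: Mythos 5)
The paper does not actually prove this theorem: it is quoted from \cite[Corollary~5.5]{davis-martin-polhill}, and Section~\ref{DMPresults} only summarizes the three ingredients (the base construction from partial difference sets, the product construction, and the Galois ring construction) that are combined there. Your proposal reconstructs exactly that assembly---atomic systems in each direct factor, truncation of each system to a common size, and iterated application of the product construction---so in architecture it matches the cited argument, and your closing remark correctly identifies the compatibility of the $(\mu,\nu)$ parameters under the product as the step needing the most care. The one substantive slip is that you appear to have the two atomic constructions the wrong way round: in \cite{davis-martin-polhill} the Galois ring construction (their Theorem~4.6) is specific to the ring $\mathrm{GR}(4,b)$, whose additive group is $\Z_4^b$, and it is what produces the size-$(2^b-1)$ system in the second row of the table; the partial-difference-set base construction (their Theorem~5.3) is what handles the factors $\Z_{2^{a_i}}^{2b_i}$ in the first row. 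Your plan to treat $\Z_{2^{a_i}}^{2b_i}$ as the additive group of $\mathrm{GR}(2^{a_i},2b_i)$ and invoke the Galois ring theorem there asserts more than that theorem establishes, so as written that step would not go through for $a_i>1$ beyond the quaternary case; with the roles of the two constructions interchanged, the remaining bookkeeping you describe is the right list of things to verify.
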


\subsection{Overview of paper}
\label{overview}

The paper \cite{davis-martin-polhill}) concludes with five open problems, of which we shall address the following (originally numbered 1, 3, 4, and 5).

\begin{itemize}
\item[Q1.]  \emph{Investigate the relationships between the difference set constructions of linked systems [given in \cite{davis-martin-polhill}] with the constructions of the Cameron-Seidel family and the Kerdock codes.}
\item[Q2.] \emph{Can difference sets be used to construct systems of linked designs with different parameters, for instance in the Hadamard family $(4N^2, 2N^2-N,N^2-N)$ but with $N$ not a power of 2?}
\item[Q3.] \emph{Is there an infinite family that generalizes [Example \ref{FromDMP}]?}
\item[Q4.] \emph{Can [generalizations of difference sets] be exploited to find other linked systems of mathematical structures?}
\end{itemize}

In Section~\ref{sec:equiv} we prove the equivalence stated in Proposition~\ref{equiv}, which requires particular care when the group $G$ is nonabelian.

In Section \ref{sec:Boolean} we reinterpret previous work on systems of linked symmetric designs and bent sets in order to produce a large reduced linking system of difference sets in the elementary abelian group~$\Z_2^{2d+2}$, giving a partial answer to Q1.

In Section~\ref{sec:nonexistence} we uncover an obstruction to the existence of a reduced linking system of McFarland difference sets having $q>2$, and a reduced linking system of Spence difference sets, using only elementary arguments that depend on a well-chosen modular reduction in the group ring. Since the associated groups are non-$2$-groups, this provides a partial answer to~Q2.

In Section~\ref{sec:construction} we seek further constructions in $2$-groups. Our main construction (Theorem~\ref{Main}) relies on the unexpected use of group difference matrices, which addresses~Q4.
We derive multiple corollaries of this construction, as summarized in Table \ref{Summary} and illustrated in Table~\ref{Compare} for abelian groups of order 64. Tables~\ref{Summary} and~\ref{Compare} also include the constructive result of Section~\ref{sec:Boolean}. (By Theorem~\ref{NecSuf}, we need not consider groups of exponent greater than $2^{d+2}$ in Table~\ref{Summary}, nor those of exponent greater than $16$ in Table~\ref{Compare}. An abelian $2$-group is isomorphic to 
$ \Z_{2^{a_1}} \times  \Z_{2^{a_2}} \times \cdots \times  \Z_{2^{a_t}}$ for some integers $a_i$ and $t$, and its \emph{rank} is then~$t$.)
We construct an infinite family of examples in nonabelian groups, whereas not a single nonabelian example was previously known. We obtain an infinite family of nonreversible examples generalizing Example \ref{FromDMP}, answering~Q3.
Finally, we give a detailed examination of reduced linking systems of difference sets in~$\Z_4^2$.

In Section~\ref{sec:open} we suggest directions for further research by posing several open problems. 

\begin{table}[h]
\begin{center}\caption {Constructions of a reduced linking system of difference sets in an abelian group $G$ of order $2^{2d+2}$, rank at least $d+1$, and exponent~$2^e$. }\label{Summary}
\vspace{2mm}
\begin{tabular}{c | c | c}
Range of $e$ & Size of system &  Source \\
\hline\\[-3mm]
1 & $2^{2d+1}-1$ & Bent set ( Corollary \ref{bent=link} ) \\[1mm] 
$[2 , \frac{d+3}{2}]$ & $2^{ \left\lfloor \frac{d+1}{e-1} \right\rfloor }-1$ & Difference matrix (Corollary \ref{improved} ) \\[2mm]
$(\frac{d+3}{2}, d+1]$ & 3 & Difference matrix (Corollary \ref{general} )\\[1mm]
$d+2$  (so $G= \Z_{2^{d+2}} \times \Z_2^d)$ & No result & 
\end{tabular}
\end{center}
\end{table}

\begin{table}[h]
\begin{center}
\caption{Comparison of maximum known sizes of reduced linking systems of difference sets in abelian groups of order $64$.}\label{Compare}
\vspace{2mm}
\begin{tabular}{c | c | c | c}
	& Previous 	& Current 	&        \\
Group   & maximum 	& maximum	& Source \\
        & known size 	& known size   	&        \\
\hline\\[-3mm]
$\Z_2^6$ & 31 \cite{bey-kyureghyan} & 31 & Bent set (Corollary \ref{bent=link}) \\
$\Z_4 \times \Z_2^4$ & None &  {7} & Difference matrix (Corollary \ref{improved}) \\
$\Z_4^2 \times \Z_2^2$ & None &  {7} & Difference matrix (Corollary \ref{improved}) \\
$\Z_4^3$ & 7 \cite{davis-martin-polhill} & 7 & Difference matrix (Corollary \ref{improved}) \\
$\Z_8 \times \Z_2^3$ & None & {3} & Difference matrix  (Corollary \ref{general})\\
$\Z_8 \times \Z_4 \times \Z_2$ & None & {3} & Difference matrix (Corollary \ref{general}) \\
$\Z_8^2$ & None & None &\\
$\Z_{16} \times \Z_2^2$ & None & None & \\
$\Z_{16} \times \Z_4$ & None & None &
\end{tabular}
\end{center}
\end{table}

\section{Proof of Proposition~\ref{equiv}}
\label{sec:equiv}
In this section we prove Proposition~\ref{equiv}. An outline of the main argument of the proof is implicit in \cite{davis-martin-polhill}, although many details are omitted there and particular care is needed when the group $G$ is nonabelian. 
We firstly use the classical result of Proposition~\ref{straightforward} to show in Lemma~\ref{MuNuReduced} that the parameters $\mu$, $\nu$ in a reduced linking system of difference sets are determined to within a sign. Lemma~\ref{MuNuReduced} corresponds to a result stated by Noda \cite[Proposition~0]{noda} for systems of linked symmetric designs.

\begin{proposition}[{\cite[p.~468]{bruck}}]\label{straightforward}
Suppose $D$ is a $(v,k,\lambda,n)$-difference set in a (not necessarily abelian) group~$G$. Then $D^{(-1)}$ is also a $(v,k,\lambda,n)$-difference set in $G$.
\end{proposition}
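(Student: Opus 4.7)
The plan is to show that $D^{(-1)}D = n\bone_G + \lambda G$ in $\Z[G]$, which together with $|D^{(-1)}| = |D| = k$ and Lemma~\ref{DsetProp} is equivalent to $D^{(-1)}$ being a $(v,k,\lambda,n)$-difference set in $G$. When $G$ is abelian this is immediate from $DD^{(-1)} = n\bone_G + \lambda G$ and commutativity, so the content of the proposition lies in the nonabelian case, where a priori $DD^{(-1)}$ and $D^{(-1)}D$ need not coincide.

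My approach is to invert $D$ in the rational group algebra $\mathbb{Q}[G]$. First I would record two facts: (i)~$DD^{(-1)} = n\bone_G + \lambda G$, directly from Lemma~\ref{DsetProp}; and (ii)~$DG = GD = kG$, since for any $x \in \mathbb{Q}[G]$ with coefficient sum $\epsilon(x)$ one has $xG = Gx = \epsilon(x)G$. Setting
$$
y := \frac{1}{n}\,D^{(-1)} - \frac{\lambda}{nk}\,G \;\in\; \mathbb{Q}[G],
$$
a short calculation from (i) and (ii) yields $Dy = \bone_G$, exhibiting a right inverse of $D$.

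The key additional ingredient is that $\mathbb{Q}[G]$ is finite-dimensional over $\mathbb{Q}$: left multiplication by $D$ is then a surjective $\mathbb{Q}$-linear endomorphism of $\mathbb{Q}[G]$ (since $a = D(ya)$ for every $a$), hence injective, so from $D(yD - \bone_G) = (Dy)D - D = 0$ we deduce $yD = \bone_G$. Expanding this identity and using (ii) once more isolates $D^{(-1)}D = n\bone_G + \lambda G$, as required.

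I anticipate the only delicate point to be the finite-dimensional two-sided invertibility step; a reader preferring a concrete argument can instead translate (i) into the matrix identity $MM^{T} = nI_v + \lambda J$ for the incidence matrix $M$ of the symmetric design developed from $D$, observe that $\det(MM^{T}) = n^{v-1}k^2 > 0$ so $M$ is invertible, and then compute $M^{T}M = M^{-1}(MM^{T})M = nI_v + \lambda J$ using $MJ = JM = kJ$, which translates back to the group ring identity desired.
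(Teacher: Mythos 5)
Your argument is correct. The paper gives no proof of this proposition---it is quoted verbatim from Bruck with a citation---but your group-algebra inversion is essentially the classical argument (and your matrix alternative is the standard ``dual of a symmetric design'' proof): the computation $Dy=\bone_G$, the passage from a one-sided to a two-sided inverse via finite-dimensionality of $\mathbb{Q}[G]$, and the extraction of $D^{(-1)}D=n\bone_G+\lambda G$ are all sound. The only caveat is that you divide by $n$ and $k$, so your proof tacitly assumes $n,k>0$; this excludes only $D=\emptyset$ and $D=G$ (since $n=0$ forces $k\in\{0,v\}$ by the counting relation $k(k-1)=\lambda(v-1)$), for which the claim is trivial, and is consistent with the paper's convention of discarding the trivial cases $k\le 1$. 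It would be worth adding one sentence to dispose of those degenerate cases explicitly.
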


\begin{lemma}\label{MuNuReduced}
Suppose $\{D_1, D_2, \dots, D_\ell\}$ is a reduced $(v,k,\lambda,n;\ell)$-linking system of difference sets in a group~$G$ with respect to integers $\mu$, $\nu$. Then
\[
\nu=\frac{ k(k\pm \sqrt{n})}{v} \text{ and } \mu = \nu \mp \sqrt{n}.
\]
\end{lemma}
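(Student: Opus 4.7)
The plan is to derive two independent scalar constraints on $\mu$ and $\nu$ by manipulating the defining identity \eqref{LinkingProperty} inside the group ring $\Z[G]$. Two tools will be used throughout. First, by Lemma \ref{DsetProp} together with Proposition \ref{straightforward} applied to $D_j^{(-1)}$, we have $D D^{(-1)} = D^{(-1)} D = n \bone_G + \lambda G$ for any $(v,k,\lambda,n)$-difference set $D$. Second, for every $y \in \Z[G]$ we have $yG = Gy = \epsilon(y)\, G$, where $\epsilon : \Z[G] \to \Z$ is the augmentation.

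The first constraint comes from applying $\epsilon$ to \eqref{LinkingProperty}, which uses only $\epsilon(D_i) = \epsilon(D_j) = \epsilon(D(i,j)) = k$ and $\epsilon(G) = v$ to yield
\begin{equation*}
k^2 = (\mu - \nu) k + \nu v. \qquad (\star)
\end{equation*}

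The second (and decisive) constraint comes from evaluating
\[
X := (D_i D_j^{(-1)})(D_i D_j^{(-1)})^{(-1)} = D_i D_j^{(-1)} D_j D_i^{(-1)}
\]
in two ways. Directly, collapsing the middle via $D_j^{(-1)} D_j = n\bone_G + \lambda G$ and using $D_i G D_i^{(-1)} = \epsilon(D_i)^2\, G = k^2 G$, I obtain
\[
X = n^2 \bone_G + \lambda(n + k^2)\, G.
\]
Alternatively, writing $A_{ij} := (\mu-\nu) D(i,j) + \nu G$ for the right-hand side of \eqref{LinkingProperty}, noting $A_{ij}^{(-1)} = (\mu-\nu) D(i,j)^{(-1)} + \nu G$ (since $G^{(-1)} = G$), and again exploiting $yG = \epsilon(y)\, G$, I expand
\[
X = A_{ij} A_{ij}^{(-1)} = (\mu-\nu)^2 (n \bone_G + \lambda G) + \bigl(2(\mu-\nu)\nu k + \nu^2 v\bigr)\, G.
\]
Since the coefficient of $\bone_G$ in $a\bone_G + bG$ is $a + b$ whereas the coefficient of any non-identity element is just $b$, equating the two expressions for $X$ forces the $\bone_G$-only contributions to agree, yielding $(\mu-\nu)^2 n = n^2$. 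Hence $\mu - \nu = \pm \sqrt{n}$, and substituting into $(\star)$ gives $\nu = k(k \mp \sqrt{n})/v$ and $\mu = \nu \pm \sqrt{n}$, exactly the paired signs in the statement.

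The main obstacle is a book-keeping one rather than a conceptual one: because $G$ may be nonabelian, one cannot permute factors, and must be careful to invoke Proposition \ref{straightforward} so as to reduce $D_j^{(-1)} D_j$ (rather than $D_j D_j^{(-1)}$) in the middle of $X$, and to apply the identity $yGy^{(-1)} = \epsilon(y)^2 G$ rather than any more naive commutation. Once these are observed, the derivation goes through uniformly for abelian and nonabelian $G$.
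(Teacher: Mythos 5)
Your proof is correct and follows essentially the same route as the paper: the augmentation identity $k^2=(\mu-\nu)k+\nu v$ is the paper's ``counting terms'' step, and your two evaluations of $X=D_iD_j^{(-1)}D_jD_i^{(-1)}$ (collapsing $D_j^{(-1)}D_j$ via Proposition~\ref{straightforward} and Lemma~\ref{DsetProp}, then comparing $\bone_G$-coefficients against the expansion of the right-hand side) reproduce the paper's computation of $\bigl((\mu-\nu)D(i,j)\bigr)\bigl((\mu-\nu)D(i,j)^{(-1)}\bigr)$. The sign pairing you obtain is the same as in the statement, merely labelled with the opposite choice of $\pm$.
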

\begin{proof}
Choose distinct $i$, $j$ satisfying $1 \le i, j \le \ell$.
By Definition~\ref{reduced}, there is a $(v, k, \lambda, n)$-difference set $D(i,j)$ in $G$ such that
\begin{equation}\label{DiDj-1}
D_i D_j^{(-1)} = (\mu-\nu)D(i,j) + \nu G \quad \mbox{in $\Z[G]$},
\end{equation}
and so
\begin{align} 
\Big((\mu-\nu)D(i,j)\Big) \Big((\mu-\nu)D(i,j)^{(-1)}\Big)
 &= \big(D_i D_j^{(-1)}-\nu G\big) \big(D_j D_i^{(-1)}-\nu G\big) \nonumber \\
 &= D_i \big(D_j^{(-1)}D_j\big) D_i^{(-1)} -2\nu k^2 G +\nu^2 v G \label{oversight}
\end{align}
because, for a subset $S$ of $G$, we have $SG = GS = |S|G$ in $\Z[G]$.
Now $D_i$ and $D_j$ and $D(i,j)$ are each $(v,k,\lambda,n)$-difference sets in $G$, and by Proposition~\ref{straightforward} so is~$D_j^{(-1)}$. Using Lemma~\ref{DsetProp} we therefore find from \eqref{oversight} that
\begin{align*}
(\mu-\nu)^2 (n \bone_G+\lambda G)
 &= D_i(n\bone_G+\lambda G)D_i^{(-1)} - 2\nu k^2G+\nu^2 v G \\
 &= (n\bone_G+\lambda G)^2 - 2\nu k^2G+\nu^2 v G.
\end{align*}
Since the coefficients of $G-\bone_G$ on both sides must be equal, comparison of the coefficients of $\bone_G$ shows that
\begin{equation}\label{sqrtn}
\mu-\nu = \mp \sqrt{n}.
\end{equation}
Counting terms on both sides of \eqref{DiDj-1} then gives
\begin{align*}
k^2 &= (\mu-\nu) k + \nu v \\
    &= \mp \sqrt{n}\, k + \nu v,
\end{align*}
which together with \eqref{sqrtn} establishes the required values for $\mu$ and~$\nu$.
\end{proof}

We can now prove Proposition~\ref{equiv}.

\begin{proof}[Proof of Proposition~\ref{equiv}]
Let $\mathcal{L}= \{D_{i,j} : 0 \le i,j  \le \ell \text{ and }i \ne j \}$ be a $(v,k,\lambda,n; \ell+1)$-linking system of difference sets in $G$ with respect to $\mu, \nu$. Let $D_i = D_{i,0}$ for $1 \le i \le \ell$ and let $\mathcal{R} = \{D_1, D_2, \cdots, D_\ell\}$. Then for all distinct $i,j$,
\begin{align*}
D_{i} D_{j}^{(-1)} = D_{i,0}D_{j,0}^{(-1)}= D_{i,0}D_{0,j}= (\mu- \nu) D_{i,j} + \nu G
\end{align*}
using \eqref{Cond2} and \eqref{Cond1}. Therefore $\mathcal{R}$ is a reduced $(v,k,\lambda,n; \ell)$-linking system of difference sets in $G$ with respect to $\mu, \nu$. 

Conversely, let $\mathcal{R}= \{D_1, D_2, \cdots, D_ \ell \}$ be a reduced $(v,k,\lambda,n;\ell)$-linking system of difference sets in $G$ with respect to $\mu, \nu$. Let $D_{i,0}=D_i$ and $D_{0,i} = D_i^{(-1)}$ for $1 \le i \le \ell$. For distinct $i,j$ not equal to 0, let $D_{i,j}$ be the difference set $D(i,j)$ given by Definition \ref{reduced} applied to $D_{i,0}$ and $D_{j,0}$, so that
\begin{equation}\label{linkedprop}
D_{i,0}D_{j,0}^{(-1)} = (\mu-\nu) D_{i,j} +\nu G.
\end{equation}
We shall show that $\mathcal{L}= \{D_{i,j} : 0 \le i,j  \le \ell \text{ and }i \ne j \}$ is a $(v,k,\lambda, n ; \ell+1)$-linking system of difference sets in $G$ with respect to $\mu, \nu$ by showing that \eqref{Cond1} and \eqref{Cond2} hold. 

To show \eqref{Cond2} for distinct $i,j$, one of which is 0, use the definition of $D_{i,0}$ and $D_{0,i}$. 
To show \eqref{Cond2} for distinct $i,j$, both of which are not 0, apply the operation $^{(-1)}$ to both sides of \eqref{linkedprop} to obtain
\begin{equation*}
D_{j,0} D_{i,0}^{(-1)} = (\mu-\nu)D_{i,j}^{(-1)} + \nu G.
\end{equation*}
Interchange $i,j$ to get
\begin{equation*}
D_{i,0} D_{j,0}^{(-1)} = (\mu-\nu)D_{j,i}^{(-1)} + \nu G.
\end{equation*}
By comparison with \eqref{linkedprop}, we conclude that $D_{i,j}= D_{j,i}^{(-1)}$, giving \eqref{Cond2}.

To show \eqref{Cond1} for distinct $i,j,h$, all of which are not 0, use \eqref{linkedprop} to form the product
\begin{align}
\Big(  (\mu-\nu)D_{h,i}  \Big) \Big(  (\mu-\nu)D_{i,j}  \Big) 
 &= \left( D_{h,0}D_{i,0}^{(-1)} -\nu G \right) \left( D_{i,0}D_{j,0}^{(-1)} -\nu G \right) \nonumber \\
 &= D_{h,0}(D_{i,0}^{(-1)}D_{i,0})D_{j,0}^{(-1)}- 2 \nu k^2 G+ \nu^2 v G. \label{systemprod}
\end{align}
 From Lemma~\ref{MuNuReduced} we have $(\mu-\nu)^2=n$. Since $D_{i,0}$ is a $(v,k,\lambda,n)$-difference set in $G$, by Proposition \ref{straightforward} so is~$D_{i,0}^{(-1)}$. Using Lemma \ref{DsetProp} we therefore find from \eqref{systemprod} that
\begin{align*}
nD_{h,i}D_{i,j}
 &=D_{h,0}(n\bone_G+ \lambda G)D_{j,0}^{(-1)}- 2 \nu k^2 G+ \nu^2 v G \\
 &=nD_{h,0}D_{j,0}^{(-1)}+ (\lambda k^2 - 2 \nu k^2 + \nu^2 v) G.
\end{align*}
Counting terms on both sides shows that $\lambda k^2 - 2 \nu k^2 + \nu^2 v = 0$, so that
\begin{align*}
D_{h,i}D_{i,j}&= D_{h,0}D_{j,0}^{(-1)} \\
&=( \mu - \nu) D_{h,j}+ \nu G
\end{align*}
using \eqref{linkedprop} again, as required for \eqref{Cond1}.

It remains to show \eqref{Cond1} for distinct $i,j,h$, exactly one of which is 0. The case $i=0$ follows from the definition of $D_{h,j}$. We now outline the case $h=0$; the case $j=0$ is similar. From 
\eqref{linkedprop} we have 
\begin{align}
(\mu - \nu) D_{0,i} D_{i,j} &= D_i^{(-1)}(D_i D_j^{(-1)}- \nu G) \nonumber \\
&=(n \bone_G + \lambda G) D_j^{(-1)} - \nu k G \nonumber \\
&= n D_{0,j} + (\lambda- \nu)kG \nonumber \\
&=(\mu- \nu)^2 D_{0,j} + (\lambda - \nu)kG, \label{lambdanu}
\end{align}
which gives \eqref{Cond1} provided the relation
\[
(\lambda- \nu)k = \nu( \mu - \nu).
\]
holds. This relation follows by multiplying \eqref{linkedprop} by $\mu-\nu$, subtracting \eqref{lambdanu}, and then counting terms on both sides.

\end{proof}

By Proposition~\ref{straightforward}, the values of the parameters $\mu, \nu$ for a (non-reduced) linking system of difference sets are also as stated in Lemma~\ref{MuNuReduced}. These values were noted in \cite[p.~94]{davis-martin-polhill} (with a typographical error switching their values) as following from~\cite[Proposition~0]{noda}.

\section{Bent sets}
\label{sec:Boolean}
Bey and Kyureghyan \cite{bey-kyureghyan}, building on earlier work of Cameron and Seidel \cite{cameron-seidel}, Delsarte \cite{delsarte-association}, and Noda \cite{noda}, provide the second of the two principal references on linking systems of difference sets. Their main result is phrased in terms of systems of linked symmetric designs, rather than linking systems of difference sets (which were not defined until 2014 in \cite{davis-martin-polhill}). 
In this section we rephrase the main result of \cite{bey-kyureghyan} in terms of the newer terminology to give Corollary~\ref{bent=link} and to clarify some of the relationships to previous work. 
Corollary~\ref{bent=link} partially answers Q1 of Section \ref{overview} by showing how the construction of \cite[Example~6.2]{davis-martin-polhill} can be improved using a Kerdock set.

A \emph{Boolean function on $\Z_2^n$} is a function $f$ from $\Z_2^n$ to $\Z_2$. The \emph{subset} of $\Z_2^n= \langle x_1, x_2, \dots, x_n \rangle$ \emph{corresponding to a Boolean function $f$ on $\Z_2^n$} is 
\[
S(f) = \{ x_1^{y_1} x_2^{y_2} \cdots  x_n^{y_n} : f(y_1,y_2, \dots, y_n) =1 \}.
\]
The \emph{Walsh-Hadamard transform} of a Boolean function $f$ on $\Z_2^n$ is the function $\widehat{f}: \Z_2^n \rightarrow \Z$ given by 
\begin{equation*}
\widehat{f}(u)= \sum_{x \in \Z_2^n} (-1)^{f(x) +u \cdot x} \quad \text{ for } u \in \Z_2^n, 
\end{equation*}
where $\cdot$ is the usual inner product on $\Z_2^n$.

\begin{definition}
A Boolean function $f$ on $\Z_2^n$ is \emph{bent} if 
\begin{equation*}
\widehat{f}(u) \in \{2^{n/2}, - 2^{n/2}\} \quad \text{ for all } u \in \Z_2^n.
\end{equation*}
\end{definition}

Bent functions are closely connected to difference sets in elementary abelian 2-groups, as shown in the following result. 
\begin{theorem}[\cite{dillon-phd}]
A Boolean function $f$ on $\Z_2^{2d+2}$ is bent if and only if $S(f)$ is a difference set in $\Z_2^{2d+2}$ . 
\end{theorem}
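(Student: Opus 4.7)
The plan is to apply character theory, since $G = \Z_2^{2d+2}$ is abelian and the defining identity for a difference set in Lemma~\ref{DsetProp} is detected by characters. The characters of $G = \langle x_1, \dots, x_{2d+2}\rangle$ are the real-valued maps $\chi_u(x_1^{y_1} \cdots x_{2d+2}^{y_{2d+2}}) = (-1)^{u \cdot y}$ for $u \in \Z_2^{2d+2}$. My strategy is to translate both the bent condition on $f$ and the difference set condition on $S(f)$ into identical statements about the values $\chi_u(S(f))$.

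First I would compute, by splitting $\widehat{f}(u)$ according to whether $f(y) = 0$ or $f(y) = 1$ and using $\sum_{y \in \Z_2^{2d+2}} (-1)^{u \cdot y} = 2^{2d+2} \delta_{u,0}$, the identity
\[
\chi_u(S(f)) = \begin{cases} (2^{2d+2} - \widehat{f}(0))/2 & \text{if } u = 0, \\ -\widehat{f}(u)/2 & \text{if } u \neq 0. \end{cases}
\]
Since every element of $G$ has order dividing $2$, we have $S(f)^{(-1)} = S(f)$ in $\Z[G]$, and so $\chi_u(S(f) S(f)^{(-1)}) = \chi_u(S(f))^2$. By Lemma~\ref{DsetProp} together with the fact that an element of $\Z[G]$ is determined by its character values, $S(f)$ is a $(2^{2d+2}, k, \lambda, n)$-difference set if and only if $k = |S(f)|$, $\chi_u(S(f))^2 = n$ for every $u \neq 0$, and $\lambda = (k^2 - n)/2^{2d+2}$.

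The two translations now match term by term: $|\widehat{f}(u)| = 2^{d+1}$ for $u \neq 0$ becomes $\chi_u(S(f))^2 = 2^{2d}$, while $|\widehat{f}(0)| = 2^{d+1}$ becomes $|S(f)| \in \{2^d(2^{d+1}\pm 1)\}$. The main obstacle is checking that the implied parameters are consistent in both directions of the biconditional. In the bent-to-difference-set direction, a short computation with $k = 2^{2d+1} \pm 2^d$ yields $n = 2^{2d}$ and $\lambda = 2^d(2^d \pm 1) \geq 0$, giving valid difference set parameters. In the reverse direction, Theorem~\ref{2grp} independently pins down $n = 2^{2d}$ and $k \in \{2^d(2^{d+1}\pm 1)\}$ (the second value obtained by complementing), which forces $|\widehat{f}(0)| = 2^{d+1}$ and completes the bent condition. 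Once this bookkeeping is dispatched, the equivalence follows immediately from the character-theoretic translation above.
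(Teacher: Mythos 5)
The paper gives no proof of this statement---it is quoted directly from Dillon's thesis---so there is no internal argument to compare against; your character-theoretic proof is the standard one and it is correct. The identity $\widehat{f}(u) = 2^{2d+2}\delta_{u,0} - 2\chi_u(S(f))$ is right, the translation of Lemma~\ref{DsetProp} into the condition that $\chi_u(S(f))^2$ is constant over $u \neq 0$ (via Fourier inversion in $\Z[\Z_2^{2d+2}]$, all characters being real) is the classical Turyn-style criterion, and the parameter bookkeeping in both directions checks out: $k = 2^{2d+1} \mp 2^d$, $n = 2^{2d}$, $\lambda = 2^d(2^d \mp 1)$ are exactly the Hadamard parameters with $N = 2^d$. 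The one caveat is in the reverse direction, where you invoke Theorem~\ref{2grp}: that theorem carries the hypothesis $2 \le k \le v/2$, and under a strictly literal reading of the paper's definition the empty set, singletons, and their complements are also difference sets, yet for $d \ge 1$ they do not arise from bent functions (e.g.\ $f \equiv 0$ has $\widehat{f}(0) = 2^{2d+2}$). So the only-if direction implicitly relies on the paper's standing convention, stated in Section~1.1, that the cases $k \in \{0,1\}$ (and their complements) are excluded as trivial. With that convention made explicit, your proof is complete and needs no further repair.
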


\begin{definition}
A \emph{bent set on $\Z_2^{2n}$} of size $\ell+1$ is a set $\{f_0,f_1, \dots, f_\ell \}$ of Boolean functions on $\Z_2^{2n}$ such that the Boolean function $f_i+f_j$ is bent for all distinct $i,j$.
\end{definition}

We may assume (by adding one function to all the others) that one function in a bent set is the zero function. We now state the main result of \cite{bey-kyureghyan}, rephrased in terms of linking systems of difference sets. 

\begin{theorem}[Bey and Kyureghyan {\cite[Theorem~1]{bey-kyureghyan}}]\label{bent1}
Let $\ell \ge 2$ and suppose $\{0,f_1, \dots, f_\ell\}$ is a bent set on $\Z_2^{2d+2}$. Then $\{S(f_1), S(f_2), \dots, S(f_\ell)\}$ is a reduced linking system of difference sets in $\Z_2^{2d+2}$.
\end{theorem}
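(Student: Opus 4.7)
The plan is to combine Dillon's correspondence between bent functions on $\Z_2^{2d+2}$ and Hadamard difference sets with the substantive content of \cite[Theorem~1]{bey-kyureghyan}, translating the latter from the language of systems of linked symmetric designs into that of reduced linking systems of difference sets. Because the bent set contains the zero function, the pairwise sum $f_i + 0 = f_i$ is bent for each $1 \le i \le \ell$, so Dillon's theorem gives that $S(f_i)$ is a $(v,k,\lambda,n)$-difference set in $\Z_2^{2d+2}$ with the Hadamard parameters forced by Theorem~\ref{2grp}. For distinct $i$ and $j$, the candidate for the difference set $D(i,j)$ in \eqref{LinkingProperty} is $S(f_i + f_j)$ (adjusted to its complement if needed to enforce $|D(i,j)| = k$), which is a difference set by the bent set property together with Dillon's theorem.

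Next, I would verify the linking equation $S(f_i)\, S(f_j)^{(-1)} = (\mu - \nu) D(i,j) + \nu G$ by comparing character values over $G = \Z_2^{2d+2}$. Since $G$ is elementary abelian of exponent~$2$, we have $S(f_j)^{(-1)} = S(f_j)$, and every character $\chi_u$ of $G$ is $\pm 1$-valued, with the direct identity $\chi_u(S(h)) = \tfrac{1}{2}\bigl(v\, \delta_{u, 0} - \widehat{h}(u)\bigr)$ for any Boolean function $h$. The trivial character yields $k^2 = (\mu - \nu) k + \nu v$, consistent with Lemma~\ref{MuNuReduced}. Each nontrivial character $\chi_u$ reduces the linking equation to a sign condition among the three Walsh values $\widehat{f_i}(u)$, $\widehat{f_j}(u)$, and $\widehat{f_i + f_j}(u)$, all of which lie in $\{\pm 2^{d+1}\}$ by bentness.

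The main obstacle is to show that these signs are consistent: that there is a single constant $\mu - \nu = \pm 2^d$ making the Walsh identity $\widehat{f_i}(u)\, \widehat{f_j}(u) = \pm 2^{d+1}\, \widehat{f_i + f_j}(u)$ hold uniformly in $u \ne 0$ and in the pair $(i,j)$. Evaluation at $u = 0$ pins down this global sign once the difference sets are normalized so that $k \le v/2$, since then $\widehat{f}(0) = +2^{d+1}$ for each of $f_i$, $f_j$, and $f_i + f_j$. Extending this sign compatibility to every $u \ne 0$ is the substantive content of \cite[Theorem~1]{bey-kyureghyan}, where it is derived from the duality structure of bent functions within a bent set. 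An alternative streamlined route is to invoke \cite[Theorem~1]{bey-kyureghyan} as a black box to obtain a system of $\ell$ linked symmetric designs on point set $\Z_2^{2d+2}$, and then convert this into a reduced linking system of difference sets via the fact that these designs share $\Z_2^{2d+2}$ as a common regular automorphism group acting by translation; combined with Proposition~\ref{equiv}, this delivers the conclusion.
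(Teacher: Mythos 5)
The paper gives no proof of this statement at all---it is presented purely as a rephrasing of the cited result of Bey and Kyureghyan---and your proposal ultimately rests on that same citation for the one substantive step (the uniform sign compatibility $\widehat{f_i}(u)\,\widehat{f_j}(u) = \pm 2^{d+1}\,\widehat{f_i+f_j}(u)$ across all $u$ and all pairs), so it is consistent with the paper's treatment rather than a competing argument. Your surrounding character-theoretic dictionary is sound: the identity $\chi_u(S(h)) = \tfrac{1}{2}\bigl(v\,\delta_{u,0} - \widehat{h}(u)\bigr)$, the identification of $D(i,j)$ with $S(f_i+f_j)$ up to complementation (noting that the $f_i$ themselves may also need replacing by $f_i+1$ so that all $|S(f_i)|$ equal the same $k \le v/2$), and the black-box route via systems of linked designs together with Proposition~\ref{equiv} are all correct.
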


The following result describes a well-known construction of a bent set on $\Z_2^{2d+2}$, due originally to Kerdock \cite{kerdock}.

\begin{theorem}[\cite{kerdock}, {\cite[page~456]{macwilliams-sloane-book}}]\label{Kerdock}
For each integer $d\ge 0$, there exists a bent set  on $\Z_2^{2d+2}$ of size $2^{2d+1}$. 
\end{theorem}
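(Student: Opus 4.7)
The plan is to invoke the classical Kerdock construction from the cited references. I outline the approach in three main steps.

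First, I reduce the problem to a statement about alternating bilinear forms. A quadratic Boolean function $Q$ on $\Z_2^{2d+2}$ is bent if and only if its associated alternating bilinear form
\[
B_Q(x,y) := Q(x+y) + Q(x) + Q(y)
\]
is non-degenerate. Since $B_{Q+Q'} = B_Q + B_{Q'}$, producing a bent set of quadratic Boolean functions $\{0, Q_1, \ldots, Q_\ell\}$ of size $\ell+1 = 2^{2d+1}$ on $\Z_2^{2d+2}$ is equivalent to producing a $\Z_2$-subspace $\Lambda$ of dimension $2d+1$ inside the space $\mathrm{Alt}_{2d+2}(\Z_2)$ of alternating $\Z_2$-bilinear forms on $\Z_2^{2d+2}$, every nonzero element of which is non-degenerate. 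Such a $\Lambda$ is classically called a \emph{Kerdock set}.

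Second, I construct $\Lambda$. Identify $\Z_2^{2d+2}$ with $F := \GF(2^{d+1}) \oplus \GF(2^{d+1})$. A basic family of alternating forms on $F$ arises by choosing, for each $t \in \GF(2^{d+1})$, the form determined by a block-matrix shape involving multiplication by $t$ composed with the trace $\mathrm{Tr}: \GF(2^{d+1}) \to \Z_2$; this yields a Kerdock family of size $2^{d+1}$ whose nonzero pairwise differences are non-degenerate by a direct determinant computation. To extend to a full $(2d+1)$-dimensional subspace of size $2^{2d+1}$, one passes to the Galois ring $R = \mathrm{GR}(4, d+1)$ (the unramified extension of $\Z_4$ of degree $d+1$) and uses its Teichm\"uller lifting together with the Galois trace $R \to \Z_4$ to parametrize the extra dimensions. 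Verification that the resulting $\Lambda$ has every nonzero element non-degenerate is then a trace-and-Frobenius calculation carried out in $R$.

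Third, I convert each $B \in \Lambda$ back into a quadratic Boolean function $Q_B$ on $\Z_2^{2d+2}$ satisfying $B_{Q_B} = B$; the resulting collection $\{Q_B : B \in \Lambda\}$ is a bent set of size $|\Lambda| = 2^{2d+1}$, as required.

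The main obstacle is the extension in the second step: enlarging the basic $2^{d+1}$-sized family to the full $(2d+1)$-dimensional subspace $\Lambda$ while maintaining non-degeneracy of every nonzero element. A naive dimension count fails here, since inside $\GF(2^{2d+2})$ the subfield $\GF(2^{d+1})$ of dimension $d+1$ over $\Z_2$ meets every $\Z_2$-subspace of dimension $2d+1$ non-trivially. Kerdock's resolution is exactly the lift to the Galois ring $\mathrm{GR}(4, d+1)$, whose multiplicative structure in characteristic $4$ reduces mod $2$ to supply the additional dimensions needed while preserving non-singularity of each nonzero form.
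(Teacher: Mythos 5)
The paper does not actually prove Theorem~\ref{Kerdock}; it invokes Kerdock's construction as presented in \cite{macwilliams-sloane-book}. Your proposal is not a faithful account of that construction, and its central reduction fails. In your first step you reduce the theorem to finding a $\Z_2$-\emph{subspace} $\Lambda$ of dimension $2d+1$ inside the alternating forms on $\Z_2^{2d+2}$, all of whose nonzero elements are non-degenerate. But a bent set of quadratic functions only requires a \emph{set} of $2^{2d+1}$ alternating forms whose pairwise sums are non-degenerate (this is what a Kerdock set is), and for $d\ge 1$ that set cannot be a linear subspace. Indeed, if such a $\Lambda$ existed, then taking $Q_B(x)=\sum_{i<j}B(e_i,e_j)x_ix_j$ (which is additive in $B$) would make the union of the cosets $Q_B+\mathrm{RM}(1,2d+2)$, $B\in\Lambda$, a \emph{linear} code with the parameters of the Kerdock code; for $d=1$ this is a linear $[16,8,6]$ binary code, which does not exist --- the Nordstrom--Robinson code is famous precisely because it beats every linear $[16,8]$ code, whose minimum distance is at most $5$. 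So the object you set out to build in steps two and three does not exist, and no Galois-ring machinery can produce it.

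Two further parameter slips reinforce that the construction you have in mind is not Kerdock's. The classical construction (MacWilliams--Sloane, p.~456) works on $\GF(2^{2d+1})\times\Z_2$, not on $\GF(2^{d+1})\oplus\GF(2^{d+1})$: it attaches to each $\alpha\in\GF(2^{2d+1})$ a quadratic function of the shape $Q_\alpha(x,y)=\mathrm{Tr}\bigl(\sum_{i}(\alpha x)^{2^i+1}\bigr)+y\,\mathrm{Tr}(\alpha x)$, and non-degeneracy of the form attached to $Q_\alpha+Q_\beta$ for $\alpha\ne\beta$ is verified by counting roots of a linearized polynomial; the map $\alpha\mapsto B_{Q_\alpha}$ is injective but \emph{not} additive, which is exactly how the construction evades the obstruction above. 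Likewise, the $\Z_4$-linear description of the Kerdock code uses the Galois ring of degree $2d+1$ over $\Z_4$ (Teichm\"uller set of size $2^{2d+1}$), not degree $d+1$. A corrected write-up should therefore drop the subspace requirement, work over $\GF(2^{2d+1})\times\Z_2$, and carry out the linearized-equation count.
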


We remark that Cameron and Seidel \cite{cameron-seidel} used a Kerdock set to construct a system of linked symmetric designs, and that Theorem~\ref{bent1} generalizes their construction by replacing a Kerdock set with a bent set.
Combining Theorems \ref{bent1} and \ref{Kerdock}, we obtain the following corollary. Note that the parameters of the difference sets in Corollary \ref{bent=link} are determined by Theorem \ref{2grp}.

\begin{corollary}\label{bent=link}
For each integer $d \ge 1$, there exists a reduced linking system of difference sets in $\Z_2^{2d+2}$ of size $2^{2d+1}-1$.
\end{corollary}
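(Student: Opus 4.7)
The plan is to combine Theorem~\ref{Kerdock} and Theorem~\ref{bent1} directly, with only a small normalization step in between.

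First, I would invoke Theorem~\ref{Kerdock} to obtain a bent set $\mathcal{F} = \{f_0, f_1, \dots, f_{2^{2d+1}-1}\}$ on $\Z_2^{2d+2}$ of size $2^{2d+1}$. To apply Theorem~\ref{bent1}, I need the bent set to contain the zero function. As noted in the paragraph preceding Theorem~\ref{bent1}, replacing $\mathcal{F}$ by $\{f_0+f_0, f_0+f_1, \dots, f_0+f_{2^{2d+1}-1}\} = \{0, g_1, \dots, g_{2^{2d+1}-1}\}$ preserves the bent-set property, because $(f_0+f_i)+(f_0+f_j) = f_i+f_j$ is bent for distinct $i,j$ (since addition is modulo~$2$). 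So the normalized collection is again a bent set on $\Z_2^{2d+2}$ of size $2^{2d+1}$, with one function equal to zero.

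Next, I would set $\ell = 2^{2d+1}-1$ and verify the hypothesis $\ell \ge 2$ of Theorem~\ref{bent1}: the assumption $d \ge 1$ gives $\ell = 2^{2d+1}-1 \ge 7 \ge 2$. Applying Theorem~\ref{bent1} to $\{0, g_1, g_2, \dots, g_\ell\}$ then yields that $\{S(g_1), S(g_2), \dots, S(g_\ell)\}$ is a reduced linking system of difference sets in $\Z_2^{2d+2}$ of size $\ell = 2^{2d+1}-1$, with parameters forced by Theorem~\ref{2grp}.

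There is essentially no obstacle here: the content is entirely in the two imported theorems, and the only detail needing attention is the cosmetic normalization to insert the zero function (and the mild check that $d\ge 1$ yields the size threshold $\ell\ge 2$ required by Theorem~\ref{bent1}). The proof is therefore a one-paragraph concatenation of Theorems~\ref{Kerdock} and~\ref{bent1}.
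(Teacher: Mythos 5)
Your proposal is correct and matches the paper's own (very brief) derivation: the paper obtains Corollary~\ref{bent=link} simply by combining Theorems~\ref{Kerdock} and~\ref{bent1}, having already noted before Theorem~\ref{bent1} that one may normalize a bent set to contain the zero function by adding one function to all the others. Your additional check that $d\ge 1$ guarantees $\ell = 2^{2d+1}-1 \ge 2$ is a sensible explicit verification of a detail the paper leaves implicit.
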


We refer to \cite[Example 1.38]{simon-masters} for an example of a bent set on $\Z_2^4$ of size 8 and the corresponding reduced $(16,6,2,4;7)$-linking system of difference sets in $\Z_2^4$.

We cannot use Theorem \ref{bent1} to produce a reduced system of linking difference sets larger than that in Corollary \ref{bent=link}, because the bent sets of Theorem \ref{Kerdock} attain the maximum size by the following result. 

\begin{theorem}[Delsarte {\cite[p.~82]{delsarte-association}}, Bey and Kyureghyan {\cite[Theorem~2]{bey-kyureghyan}}]\label{bent2}
For each integer $d \ge 0$, there is no bent set on $\Z_2^{2d+2}$ of size greater than $2^{2d+1}$.
\end{theorem}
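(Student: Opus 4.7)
The plan is to form the Gram matrix of the sign vectors associated with the bent set, combine its positive semidefiniteness with basic rank arguments, and then invoke deeper structural results to close the remaining factor-of-two gap.

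First I would assume (by adding a fixed function to every element if necessary) that $f_0 = 0$ lies in the bent set, so that each $f_i$ with $i \ge 1$ is itself bent. Setting $N = 2^{2d+2}$, I would associate to each $f_i$ the sign vector $v_i \in \{\pm 1\}^N$ given by $v_i(x) = (-1)^{f_i(x)}$ and form the $(\ell+1)\times(\ell+1)$ Gram matrix $M = (\langle v_i, v_j\rangle)$. Clearly $M_{ii} = N$, and for $i \neq j$ the bent condition on $f_i + f_j$ applied at the origin yields
\[
M_{ij} = \sum_{x \in \Z_2^{2d+2}} (-1)^{f_i(x)+f_j(x)} = \widehat{f_i+f_j}(0) \in \{\pm 2^{d+1}\}.
\]
Being a Gram matrix, $M$ is positive semidefinite of rank at most $N$, which immediately gives the preliminary bound $\ell + 1 \le N = 2^{2d+2}$.

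To improve this by a factor of two I would pass to the association-scheme framework: by Theorem~\ref{bent1} together with Proposition~\ref{equiv}, the bent set yields a linking system of $\ell+1$ Hadamard difference sets in $\Z_2^{2d+2}$, which in turn produces a $3$-class $Q$-antipodal cometric association scheme with $\ell+1$ fibres of size $N$ (see \cite{vandam}). The Krein positivity conditions for such schemes -- equivalently the Fisher-type inequality for pairwise linked Hadamard designs going back to Cameron and Seidel~\cite{cameron-seidel} -- then force the number of fibres to satisfy $\ell + 1 \le N/2 = 2^{2d+1}$, giving exactly the bound claimed.

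The hard part will be this factor-of-two improvement: positive semidefiniteness of $M$ alone will not suffice. Already at $d = 0$ the matrix
\[
\begin{pmatrix} 4 & 2 & 2 \\ 2 & 4 & 2 \\ 2 & 2 & 4 \end{pmatrix}
\]
satisfies the diagonal/off-diagonal constraints imposed on $M$ and is positive semidefinite with eigenvalues $8,2,2$, yet no bent set of size $3$ exists on $\Z_2^2$. The extra rigidity needed comes from demanding $|\widehat{f_i+f_j}(u)| = 2^{d+1}$ at \emph{every} $u \in \Z_2^{2d+2}$ and not merely at $u = 0$; extracting this rigidity -- whether through the Krein bound on the cometric scheme above, or by a direct eigenvalue/polynomial identity on $M$ coming from higher-order Walsh correlations -- is where the real work of the proof must happen.
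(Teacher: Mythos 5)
This theorem is quoted in the paper from Delsarte and from Bey--Kyureghyan; the paper supplies no proof of its own, so your attempt can only be judged on its internal completeness --- and it is not complete. The entire content of the theorem is the factor-of-two improvement from $N=2^{2d+2}$ to $N/2$, and that is precisely the step you do not carry out: you ``invoke the Krein positivity conditions'' and a ``Fisher-type inequality'' and then candidly admit that extracting this rigidity ``is where the real work of the proof must happen.'' Naming the machinery is not the same as verifying that it yields $\ell+1\le N/2$ for these particular parameters, and routing the argument through Theorem~\ref{bent1} back into the association-scheme literature amounts to citing the very sources the theorem already credits. There is also a smaller error earlier: positive semidefiniteness of the Gram matrix $M$ together with ``rank at most $N$'' does \emph{not} give $\ell+1\le N$, since a singular Gram matrix of arbitrary size can have rank $\le N$ (the vectors $v_i$ need not be independent, and nothing you wrote shows $M$ is nonsingular). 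So even your preliminary bound is unjustified as stated.

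Your diagnosis of \emph{where} the rigidity must come from --- the bent condition at every $u$, not just $u=0$ --- is exactly right, and it can be turned into a short self-contained proof that you stopped just short of. For each $i$ the normalized vectors $N^{-1/2}\,v_i\cdot((-1)^{u\cdot x})_{x}$, as $u$ ranges over $\Z_2^{2d+2}$, form an orthonormal basis of $\R^N$; for $i\ne j$ the inner product of a vector from the $i$th basis with one from the $j$th is $N^{-1}\widehat{f_i+f_j}(u+u')=\pm N^{-1/2}$, and the standard basis of $\R^N$ is likewise unbiased with each of these. This gives $\ell+2$ pairwise unbiased orthonormal bases of $\R^N$. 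The traceless projections $u_ju_j^{T}-\tfrac{1}{N}I$ attached to one basis span an $(N-1)$-dimensional subspace of the traceless symmetric matrices, and for unbiased bases these subspaces are mutually orthogonal in the trace form since $\operatorname{tr}\bigl((P-\tfrac{1}{N}I)(Q-\tfrac{1}{N}I)\bigr)=|\langle u,w\rangle|^2-\tfrac{1}{N}=0$. Hence $(\ell+2)(N-1)\le \tfrac{N(N+1)}{2}-1$, i.e.\ $\ell+2\le \tfrac{N+2}{2}$, which is exactly $\ell+1\le 2^{2d+1}$. Until some argument of this kind (or an equivalent Delsarte-type computation) is written down, your proposal is a plan rather than a proof.
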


\section{Nonexistence results in non-2-groups}
\label{sec:nonexistence}
Several authors have established constraints on the existence of systems of linked symmetric designs \cite{cameron-doubly}, \cite{noda}, \cite{mathon}, \cite{kodalen}, which in turn imply the nonexistence of corresponding linking systems of difference sets. In particular, Kodalen \cite[Appendix~1]{kodalen} determined which of the 21 known families of symmetric designs have parameters that satisfy integrality conditions (corresponding to those arising from Lemma~\ref{MuNuReduced}) necessary for a system of linked symmetric designs to exist.
However, nonexistence results that apply only to linking systems of difference sets have not previously been found.
In this section we uncover an obstruction to the existence of a reduced linking system of difference sets in non-$2$-groups, both when the difference sets are constructed by the McFarland/Dillon method (Theorem \ref{McFarland}) and when they are constructed by the Spence method (Theorem \ref{Spence}). The results are not restricted to abelian groups. 

Our nonexistence proofs use only elementary arguments, combining properties of hyperplanes given in Proposition~\ref{HYPE}, modular reduction in the group ring, and projection to a subgroup. Each of these techniques is well-known; the novelty of the proof lies in recognizing the correct modular reduction, and in combining the various ingredients in the correct order.

\subsection{McFarland/Dillon and Spence constructions}

We first present the constructions originally given by McFarland (and later modified by Dillon) and Spence for the parameter families named after them. Both constructions rely on the properties of hyperplanes of a vector space.

\begin{definition}
\label{def:hyperplane}
Let $V$ be a vector space of dimension $d+1$ over $\GF$$(q)$. A \emph{hyperplane} of $V$ is a $d$-dimensional subspace of $V$.
\end{definition}
The number of hyperplanes in the vector space $V$ of Definition~\ref{def:hyperplane} is $\frac{q^{d+1}-1}{q-1}$.

\begin{theorem}[McFarland \cite{mcfarland-noncyclic}, Dillon \cite{dillon-variations}] \label{McFarland}
Let $q$ be a prime power and $d$ a nonnegative integer, and let $s = \frac{q^{d+1}-1}{q-1}$. Let $G$ be a group containing a central subgroup $E$ of index $s+1$ isomorphic to the elementary abelian group of order $q^{d+1}$.  Let $g_0, g_1, \dots , g_s$ be a set of coset representatives for $E$ in $G$. Let $H_1, H_2,\dots , H_s$ be the subgroups of $G$ corresponding to the hyperplanes of $E$, under an isomorphism $\phi$, when $E$ is regarded as a vector space of dimension $d+1$ over $\GF$$(q)$. Then
\begin{equation*}
D=\sum_{i=1}^s g_i H_i
\end{equation*}
is a difference set in $G$ with McFarland parameters
$(v,k,\lambda,n) = (q^{d+1}(s+1), q^ds, q^d (s-q^d),q^{2d})$.
\end{theorem}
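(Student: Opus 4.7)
The plan is to verify the group-ring identity $DD^{(-1)} = q^{2d} \bone_G + q^d(s-q^d)\,G$ required by Lemma~\ref{DsetProp}. The size $|D| = s q^d = k$ is immediate: the cosets $g_i E$ for $0 \le i \le s$ are pairwise disjoint, and each $g_i H_i \subseteq g_i E$ contributes $q^d$ distinct elements. Because $E$ is central, every $H_i \subseteq E$ commutes with all of $G$, so $(g_i H_i)^{(-1)} = g_i^{-1} H_i$ and $g_i H_i g_j^{-1} H_j = g_i g_j^{-1} H_i H_j$. I would therefore split
\[
DD^{(-1)} = \sum_{i=1}^s H_i H_i + \sum_{i \ne j} g_i g_j^{-1} H_i H_j
\]
into its diagonal and off-diagonal pieces and evaluate each using standard properties of hyperplanes.

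For the diagonal, $H_i H_i = q^d H_i$, so the contribution is $q^d \sum_{i=1}^s H_i$. A short duality argument in the $(d+1)$-dimensional $\GF(q)$-space $E$ shows that each nonidentity element lies in exactly $t := \tfrac{q^d-1}{q-1}$ hyperplanes, while $\bone_G$ lies in all $s$ of them, so $\sum_i H_i = q^d \bone_G + tE$ (using $s - t = q^d$). For the off-diagonal, any two distinct hyperplanes satisfy $|H_i \cap H_j| = q^{d-1}$ and $H_i + H_j = E$, giving $H_i H_j = q^{d-1} E$ in $\Z[G]$. The off-diagonal sum then reduces to $q^{d-1} E \cdot \sum_{i \ne j} g_i g_j^{-1}$, which I would evaluate by noting that $\{g_j^{-1} E : 0 \le j \le s\}$ is again a transversal of $E$ (since $E$ is normal, hence $\sum_{j=0}^s g_j^{-1} E = G$ in $\Z[G]$) and then peeling off the missing $j = 0$ and $j = i$ contributions to obtain $q^{d-1}(s-1)(G - E)$.

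The crux will be the final cancellation. Collecting coefficients, the $E$-contribution to $DD^{(-1)}$ is $q^d t - q^{d-1}(s-1)$, and this vanishes identically because $qt = s - 1$ follows from the defining formulas for $s$ and $t$. What remains is $q^{2d}\bone_G + q^{d-1}(s-1)\,G$, and checking $q^{d-1}(s-1) = q^d(s - q^d) = \lambda$ completes the verification via Lemma~\ref{DsetProp}. The main obstacle, beyond careful bookkeeping of cosets, is recognizing this $E$-cancellation as precisely the combinatorial identity that makes the McFarland parameters work; the proof is essentially driven by centrality plus two elementary facts about hyperplanes in a vector space.
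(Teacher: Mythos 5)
Your verification is correct: the diagonal/off-diagonal split, the identities $H_iH_i = q^dH_i$ and $H_iH_j = q^{d-1}E$ (the paper's Proposition~\ref{HYPE}), the count $\sum_i H_i = q^d\bone_G + tE$ with $t = \frac{q^d-1}{q-1}$, and the transversal bookkeeping giving $q^{d-1}(s-1)(G-E)$ all check out, and the cancellation $q^dt = q^{d-1}(s-1)$ does yield $DD^{(-1)} = q^{2d}\bone_G + \lambda G$ as required by Lemma~\ref{DsetProp}. The paper itself states Theorem~\ref{McFarland} without proof (citing McFarland and Dillon), but your argument is the standard one and is exactly the machinery the paper deploys in its own computations in the proofs of Theorem~\ref{NoMcFarland} and Lemma~\ref{newconstruction}.
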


\begin{theorem}[Spence \cite{spence}] \label{Spence}
Let $d\ge 0$ and let $s = \frac{3^{d+1}-1}{2}$. Let $G$ be a group containing a central subgroup $E$ of index $s$ isomorphic to~$\Z_3^{d+1}$.  Let $ g_1, \dots , g_s$ be a set of coset representatives for $E$ in $G$. Let $H_1, H_2, \dots , H_s$ be the subgroups of $G$ corresponding to the hyperplanes of $E$ when $E$ is regarded as a vector space of dimension $d+1$ over $\GF$$(3)$. Then
\begin{equation*}
D=g_1(E-H_1 )+ \sum_{i=2}^s g_i H_i
\end{equation*}
is a difference in $G$ with Spence parameters
$(v,k,\lambda,n) = \left( 3^{d+1} s, 3^d (s+1), 3^d (s+1-3^d), 3^{2d} \right)$.
\end{theorem}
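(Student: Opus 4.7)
The plan is to apply Lemma \ref{DsetProp}: it suffices to verify $DD^{(-1)} = n\bone_G + \lambda G$ in $\Z[G]$ with $n = 3^{2d}$ and $\lambda = 3^d(s+1-3^d) = \tfrac{3^d(3^d+1)}{2}$. A direct count confirms $|D| = (3^{d+1}-3^d) + (s-1)3^d = 3^d(s+1) = k$, and the arithmetic identity $k^2 = n + \lambda v$ holds with these parameters.

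I would write $D = \sum_{i=1}^s g_i A_i$ with $A_1 := E - H_1$ and $A_i := H_i$ for $2 \le i \le s$. Each $A_i$ lies in $\Z[E]$, is symmetric ($A_i^{(-1)} = A_i$), and commutes with every element of $\Z[G]$ because $E$ is central. Hence $D^{(-1)} = \sum_i A_i g_i^{-1}$ and
\begin{equation*}
DD^{(-1)} = \sum_{i=1}^s A_i^2 + \sum_{i \neq j} A_i A_j\, g_i g_j^{-1},
\end{equation*}
which I would analyse by splitting into the diagonal and off-diagonal parts.

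For the diagonal, the identities $H_i^2 = 3^d H_i$, $EH_1 = 3^d E$, and $H_1^2 = 3^d H_1$ give $(E-H_1)^2 = 3^d E + 3^d H_1$; combining with the standard hyperplane identity $\sum_{i=1}^s H_i = 3^d \bone_G + \tfrac{3^d-1}{2}\, E$ (each non-identity element of $E$ lies in exactly $\tfrac{3^d-1}{2}$ hyperplanes) should collapse $\sum_i A_i^2$ to $n\bone_G + \lambda E$. For the off-diagonal part, the key observation is that any two distinct hyperplanes of $E$ together generate $E$ and intersect in a subspace of size $3^{d-1}$, so their group-ring product satisfies $H_iH_j = 3^{d-1}E$ in $\Z[E]$. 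Consequently every off-diagonal product $A_iA_j$ is a scalar multiple of $E$: specifically $3^{d-1}E$ when $i,j \ge 2$, and $2\cdot 3^{d-1}E$ when exactly one of $i,j$ equals $1$. Since $E$ is central, $E\cdot g$ equals the group-ring sum of the coset $gE$, so each off-diagonal term is a scalar multiple of a full coset of $E$.

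It then remains to count, for each nontrivial coset $cE$, the contributions from pairs $(i,j)$ with $g_ig_j^{-1} \in cE$. A short calculation in the quotient $G/E$ shows these pairs split into exactly one with $i=1$, one with $j=1$, and $s-2$ with $i,j\ge 2$, yielding coefficient $2\cdot 3^{d-1} + 2\cdot 3^{d-1} + (s-2)3^{d-1} = 3^{d-1}(s+2) = \lambda$ on every element of $cE$. Combining with the diagonal then gives $DD^{(-1)} = (n+\lambda)\bone_G + \lambda(G - \bone_G) = n\bone_G + \lambda G$, as required. The main subtlety — and what distinguishes Spence's construction from McFarland's — is handling the asymmetric first summand $g_1(E-H_1)$, present because only $s$ coset representatives are available (rather than the $s+1$ used by McFarland); this modification is precisely what makes the off-diagonal coefficients uniform across all nontrivial cosets.
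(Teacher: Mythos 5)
Your proof is correct. The paper itself states Theorem~\ref{Spence} without proof (it is quoted from Spence), but your argument is exactly the computation the paper carries out in its related results: write $D=\sum_i g_iA_i$ with the $A_i\in\Z[E]$ central and symmetric, apply the hyperplane product identity of Proposition~\ref{HYPE} together with $H_i^2=3^dH_i$ and $EH_i=3^dE$, and split into diagonal terms (which give $n\bone_G+\lambda E$ via the incidence count $\sum_iH_i=3^d\bone_G+\tfrac{3^d-1}{2}E$) and off-diagonal terms (which give $\lambda(G-E)$ by counting, for each nontrivial coset of $E$ in $G/E$, the one pair with $i=1$, the one with $j=1$, and the $s-2$ pairs with $i,j\ge 2$). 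All the arithmetic checks out, including $3^{d-1}(s+2)=\lambda$; the only cosmetic remark is that for $d=0$ the quantity $3^{d-1}$ appears formally but the off-diagonal sum is empty since $s=1$, so no issue arises.
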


In the McFarland/Dillon construction of Theorem~\ref{McFarland}: 
the subgroup $E$ has index $s+1$ in $G$; 
the subgroups $H_1, H_2, \dots, H_s$ of $G$ corresponding to the hyperplanes of $E$ depend on the isomorphism $\phi$ when $q$ is not a prime; 
and the difference set $D$ comprises one coset of each of these $s$ subgroups.
In contrast, in the Spence construction of Theorem~\ref{Spence}: 
the subgroup $E$ has index $s$ in $G$; 
the subgroups $H_1, H_2, \dots, H_s$ of $G$ corresponding to the hyperplanes of $E$ are determined without reference to an isomorphism $\phi$, because the construction is over the prime field \GF(3);
and the difference set $D$ comprises a coset of the complement in $E$ of one of the subgroups together with a coset of each of the remaining $s-1$ subgroups. 
Note that constructions other than Theorem \ref{McFarland} are known for difference sets with McFarland parameters~\cite{unify}.

We now state our two nonexistence results.

\begin{theorem}\label{NoMcFarland}
Let $ q > 2$ be a prime power and $d$ a positive integer, and let $s = \frac{q^{d+1}-1}{q-1}$. Then there is no reduced linking system of difference sets with McFarland parameters 
\begin{equation*}
(v,k,\lambda, n)=  \left( q^{d+1} \left(s+1 \right), q^d s, q^d (s-q^d ), q^{2d} \right)
\end{equation*}
in which two of the difference sets are constructed as in Theorem~\ref{McFarland} with respect to the same subgroup $E$ and the same isomorphism~$\phi$. 
\end{theorem}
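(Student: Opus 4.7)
The plan is to compute $D_1D_2^{(-1)}$ explicitly from the McFarland structure and compare with the linking-system relation of Definition~\ref{reduced} after a carefully chosen modular reduction in $\Z[G]$. Because both difference sets are built from the same central subgroup $E$ and the same isomorphism $\phi$, they involve the same collection of hyperplane subgroups $H_1,\ldots,H_s$ of $E$. After reindexing the pairings of hyperplanes to cosets, I write $D_1=\sum_{i=1}^{s}y_iH_i$ and $D_2=\sum_{i=1}^{s}z_iH_i$ for suitable coset representatives $y_i,z_i\in G$. The hyperplane products are $H_iH_i=q^dH_i$, and for $i\ne j$ any two distinct hyperplanes of $E$ span $E$ and intersect in a subspace of size $q^{d-1}$, so $H_iH_j=q^{d-1}E$. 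Using centrality of $E$, expanding gives
\begin{equation*}
D_1D_2^{(-1)}=q^{d}\sum_{i=1}^{s}y_iz_i^{-1}H_i+q^{d-1}E\sum_{i\ne j}y_iz_j^{-1}.
\end{equation*}

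Applying Lemma~\ref{MuNuReduced} to the McFarland parameters forces $\mu-\nu=-q^d$ and $\nu=q^{d-1}s$ (the other sign choice makes $\nu$ non-integer once $q>2$). So the linking relation reads
\begin{equation*}
q^{d-1}\bigg[q\sum_{i}y_iz_i^{-1}H_i+E\sum_{i\ne j}y_iz_j^{-1}\bigg]=q^{d-1}\bigl(-qD(1,2)+sG\bigr),
\end{equation*}
and after cancelling $q^{d-1}$ and reducing modulo $q$ in $\Z[G]$, every term carrying an extra factor of $q$ vanishes, leaving the clean congruence
\begin{equation*}
E\sum_{i\ne j}y_iz_j^{-1}\equiv sG\pmod{q}.
\end{equation*}

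The left-hand side is a nonnegative integer combination of $E$-cosets: if $m_c$ denotes the number of pairs $(i,j)$ with $i\ne j$ and $y_iz_j^{-1}\in c$, then every element of $c$ occurs with coefficient $m_c$. Since $s=1+q+\cdots+q^d\equiv 1\pmod{q}$, the congruence forces $m_c\equiv 1\pmod{q}$ for each of the $s+1$ cosets. Summing over all cosets yields $\sum_c m_c=s(s-1)$ on the one hand and $\sum_c m_c\equiv s+1\equiv 2\pmod{q}$ on the other, while $s(s-1)\equiv 1\cdot 0\equiv 0\pmod{q}$. Hence $0\equiv 2\pmod{q}$, contradicting $q>2$. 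The main obstacle is identifying the correct modular reduction: the hyperplane products naturally produce the factor $q^{d-1}$ that must be stripped \emph{before} reducing mod $q$, and the contradiction emerges only after aggregating the coset-by-coset congruences globally over $G/E$. Note that for $q=2$ one has $s+1\equiv 0\pmod 2$, matching $s(s-1)\equiv 0\pmod 2$, so the argument correctly refuses to exclude the $2$-group case.
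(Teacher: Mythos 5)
Your proof is correct and follows essentially the same route as the paper's: expand $D_1D_2^{(-1)}$ via the hyperplane products, eliminate the lower sign in Lemma~\ref{MuNuReduced}, strip the factor $q^{d-1}$ and reduce modulo $q$, and derive $0\equiv 2\pmod q$ by counting the $s(s-1)$ pairs $(i,j)$ with $i\ne j$ across the $s+1$ cosets of $E$ (the paper packages this final count as a projection $\rho:\Z[G]\to\Z[E]$ followed by comparing coefficients of $\bone_E$, which is the same computation). The one step you assert without justification is that the lower sign makes $\nu=q^{d-1}s(s-1)/(s+1)$ non-integral; this is true but not immediate, and the paper isolates it as Lemma~\ref{determined}, proved by a short case analysis according to whether $q$ is odd or an even prime power greater than~$2$.
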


\begin{theorem}\label{NoSpence}
Let $d$ be a positive integer and let $s = \frac{3^{d+1}-1}{2}$. Then there is no reduced linking system of difference sets with Spence parameters 
\begin{equation*}
(v,k, \lambda, n) = \left( 3^{d+1} s, 3^d (s+1), 3^d (s+1-3^d), 3^{2d} \right)
\end{equation*}
in which two of the difference sets are constructed as in Theorem \ref{Spence}.
\end{theorem}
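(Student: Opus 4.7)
I would mirror what is presumably the strategy for Theorem~\ref{NoMcFarland}: combine projection to $G/E$, modular reduction of $D_1 D_2^{(-1)}$ in $\Z[G]$, and the elementary hyperplane-product identities supplied by Proposition~\ref{HYPE}. Assume for contradiction that two Spence difference sets $D_1, D_2$ built from a central subgroup $E \cong \Z_3^{d+1}$ lie in a reduced linking system; since $|G/E|=s$ is coprime to $3$, the subgroup $E$ is the unique Sylow $3$-subgroup of $G$, so both constructions use the same $E$. By Lemma~\ref{MuNuReduced} and integrality only one sign of $\sqrt{n}$ survives, giving $\mu - \nu = 3^d$ and $\nu = 3^{d-1}(s+1)$, so the relation to be refuted is
\[
D_1 D_2^{(-1)} = 3^d D(1,2) + 3^{d-1}(s+1)\, G.
\]

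The setup has two parts. First, apply $\pi\colon \Z[G]\to \Z[G/E]$: using $\pi(H_i)=3^d \bone_{G/E}$ and $\pi(E)=3^{d+1}\bone_{G/E}$, a short computation of both sides forces $D(1,2)$ to have Spence-shaped coset structure, with exactly one coset of $E$ containing $2\cdot 3^d$ elements of $D(1,2)$ and every other containing $3^d$. Second, and decisively, reduce both sides of the linking relation modulo $3^d$ inside $\Z[G]$. Proposition~\ref{HYPE} gives $H_iH_j = 3^{d-1}E$ for $i\neq j$ and $H_iH_i = 3^dH_i$; the derived products $(E-H_i)H_j$, $H_i(E-H_j)$, and $(E-H_i)(E-H_j)$ are obtained by direct expansion. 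Every term in the expansion of $D_1D_2^{(-1)}$ therefore collapses modulo $3^d$ to an element of $\{0,\,3^{d-1}E_{\bar c},\,2\cdot 3^{d-1}E_{\bar c}\}$, where $\bar c \in G/E$ is the coset containing the relevant $a_i b_j^{-1}$ and $E_{\bar c}$ is its coset sum. Hence the left-hand side has the form $3^{d-1}\sum_{\bar c}M_{\bar c}E_{\bar c}\pmod{3^d}$ for non-negative integers $M_{\bar c}$ that weight each mismatched-hyperplane pair in slice $\bar c$ by $1$ or $2$ according to the number of ``$E-H$'' factors. Because $s+1\equiv 2\pmod 3$, the right-hand side reduces to $2\cdot 3^{d-1}G$, which forces $M_{\bar c}\equiv 2\pmod 3$ for every $\bar c$; summing over all $s$ cosets then gives $\sum_{\bar c}M_{\bar c}\equiv 2s\equiv 2\pmod 3$, using $s\equiv 1\pmod 3$.

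The main obstacle, and the punchline, is the direct evaluation of $\sum_{\bar c}M_{\bar c}$. I would split on whether the two distinguished hyperplanes of $D_1$ and $D_2$ coincide, and use that each difference set's assignment of hyperplanes to cosets of $E$ is a bijection between the $s$ cosets and the $s$ hyperplanes. Classifying the $s^2$ ordered pairs of coset-indices into four types, according to whether each coordinate equals the distinguished one or not, and then counting equal-versus-mismatched hyperplanes within each type, one obtains $\sum_{\bar c}M_{\bar c} = (s-1)(s+2)$ in the coincident case and $s^2 + s - 4$ in the non-coincident case. For $s\equiv 1\pmod 3$ these are $\equiv 0$ and $\equiv 1 \pmod 3$ respectively, and neither matches the required residue $2\pmod 3$ from the previous paragraph. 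Keeping track of the four pair-types without slip is the most delicate piece of bookkeeping, but once they are tallied separately the mod-$3$ identities drop out immediately.
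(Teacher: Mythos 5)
Your proposal is correct and follows essentially the same route as the paper: fix the sign of $\mu-\nu$ via Lemma~\ref{MuNuReduced} and the divisibility Lemma~\ref{determined2}, reduce the linking relation modulo $3^d$ so that only the mismatched-hyperplane products $H_iH_j$ (and the $E$-versus-$H$ cross terms) survive by Proposition~\ref{HYPE}, split on whether the two distinguished hyperplanes coincide, and reach a contradiction modulo $3$ by counting coset weights (your totals $(s-1)(s+2)$ and $s^2+s-4$ are exactly the paper's signed counts with $-1$ rewritten as $2$, and summing your $M_{\bar c}$ over cosets is the paper's projection to $E$). The preliminary projection to $\Z[G/E]$ to pin down the coset structure of $D(1,2)$ is superfluous, since $3^dD(1,2)$ vanishes modulo $3^d$ anyway, but it does no harm.
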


The condition in Theorem~\ref{NoMcFarland}, that the two difference sets are constructed with respect to the same subgroup $E$, can be omitted when $q = p^r$ for an odd prime $p$: the central subgroup $E$ is then a Sylow $p$-subgroup of the group $G$ of order $q^{d+1}(s+1)$ because $\gcd(p,s+1)=1$, and so is unique by Sylow's Third Theorem. Similarly, this condition is not needed in Theorem~\ref{NoSpence}. The condition in Theorem~\ref{NoMcFarland}, that the two difference sets are constructed with respect to the same isomorphism $\phi$, can be omitted when $q$ is a prime.

By imposing the condition that $d$ should be a positive integer in Theorems \ref{NoMcFarland} and \ref{NoSpence}, we exclude trivial McFarland and Spence difference sets containing a single element. We can obtain a stronger nonexistence result than Theorem~\ref{NoSpence} in the case $d=1$ when the group is abelian, because the classification result given by Turyn {\cite[Theorem~10]{turyn-charsums}} and completed by Spence  {\cite[Section~2]{spence}} states that every $(36,15,6,9)$-difference set in an abelian group of order 36 is constructed as in Theorem \ref{Spence} (for some labelling of the subgroups $H_1,H_2, H_3,H_4$).

\begin{corollary}\label{cor:36}
There is no reduced linking system of $(36,15,6,9)$-difference sets in an abelian group. 
\end{corollary}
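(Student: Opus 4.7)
The plan is to derive this corollary as an immediate consequence of Theorem~\ref{NoSpence} (applied with $d=1$) combined with the classification of $(36,15,6,9)$-difference sets in abelian groups attributed to Turyn and Spence that is quoted in the preceding paragraph. Theorem~\ref{NoSpence} rules out reduced linking systems of Spence difference sets in which two specific members arise from Spence's construction, and the classification upgrades this by forcing \emph{every} difference set in an abelian group of order $36$ with these parameters to be of Spence type.

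More concretely, I would argue by contradiction: assume $\{D_1,D_2,\dots,D_\ell\}$ is a reduced $(36,15,6,9;\ell)$-linking system of difference sets in some abelian group $G$ of order~$36$. Since $\ell \ge 2$ by Definition~\ref{reduced}, the system contains at least two difference sets $D_1$ and~$D_2$. By the Turyn--Spence classification, each $D_i$ coincides (possibly after a relabelling of the four hyperplanes $H_1,H_2,H_3,H_4$) with a difference set produced by the Spence construction of Theorem~\ref{Spence} applied to some central elementary abelian subgroup of $G$ of order~$9$. Hence both $D_1$ and $D_2$ are Spence difference sets, and Theorem~\ref{NoSpence} with $d=1$ (so $s=4$ and parameters $(36,15,6,9)$) provides the contradiction.

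The only point that warrants checking is the hypothesis in Theorem~\ref{NoSpence} that the two Spence difference sets be built from the \emph{same} central subgroup $E$. This is handled by the remark immediately after Theorem~\ref{NoSpence}: since $\gcd(3,s)=\gcd(3,4)=1$, a central subgroup of order $9$ is a Sylow $3$-subgroup of $G$, and abelian groups have a unique Sylow $3$-subgroup. Therefore both $D_1$ and $D_2$ are automatically constructed over the same $E$, with no freedom to choose an isomorphism $\phi$ since the construction is already over the prime field $\mathrm{GF}(3)$.

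There is no substantive obstacle in this argument: the heavy lifting has been done by Theorem~\ref{NoSpence} and by the classical classification, and the role of the corollary is essentially to observe that in the abelian $v=36$ case the hypothesis of Theorem~\ref{NoSpence} becomes vacuous. The only step requiring any care is verifying that the Sylow uniqueness indeed removes the ``same $E$'' caveat, which is what the parenthetical remark following Theorem~\ref{NoSpence} already records.
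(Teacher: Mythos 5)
Your proposal is correct and matches the paper's (implicit) argument exactly: the corollary follows by combining Theorem~\ref{NoSpence} with $d=1$ with the Turyn--Spence classification quoted in the preceding paragraph, and the ``same $E$'' hypothesis is vacuous because the central subgroup of order $9$ is the unique Sylow $3$-subgroup. Your additional care in verifying that the Sylow uniqueness remark disposes of the hypothesis is precisely the point the paper records in the remark following Theorem~\ref{NoSpence}.
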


The parameters of a $(v,k,\lambda,n)$-difference set in $\Z_2^2 \times \Z_3^2$ or $\Z_4 \times \Z_3^2$ satisfying $2 \le k \le v/2$ must be $(36, 15, 6, 9)$, by solving the difference set counting relation $k(k-1)=\lambda(v-1)$ for $v=36$.
Therefore by Corollary~\ref{cor:36}, each of $\Z_2^2 \times \Z_3^2$ and $\Z_4 \times \Z_3^2$ is an abelian group $G$ for which it is known that a $(v,k,\lambda,n)$-difference set exists in $G$ and the corresponding values of $\mu$ and $\nu$ specified in Lemma~\ref{MuNuReduced} are integers, but a (reduced) linking system of difference sets does not exist in~$G$. The only other such abelian group we are aware of is $\Z_8 \times \Z_2$, for which we determined the nonexistence result using Theorem~\ref{2grp} and exhaustive search. 

\subsection{Proof of Theorems \ref{NoMcFarland} and \ref{NoSpence}}\label{nonexistproof}

We next derive some divisibility conditions (Lemmas~\ref{determined} and~\ref{determined2}), and state a well-known result on hyperplanes (Proposition~\ref{HYPE}).

\begin{lemma}\label{determined}
Let $q >2$ be a prime power and $d$ a positive integer, and let $s = \frac{q^{d+1}-1}{q-1}$. Then $s+1$ does not divide $q^{d-1}s(s-1)$. 
\end{lemma}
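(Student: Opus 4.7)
The plan is to show divisibility fails by working entirely modulo $s+1$. The key observation is that $s \equiv -1 \pmod{s+1}$, so
\[
s(s-1) \equiv (-1)(-2) = 2 \pmod{s+1},
\]
and hence $q^{d-1}s(s-1) \equiv 2q^{d-1} \pmod{s+1}$. Thus the divisibility $s+1 \mid q^{d-1}s(s-1)$ is equivalent to $s+1 \mid 2q^{d-1}$, and it will be enough to rule this out.

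Next I would analyze $\gcd(s+1,q)$. From $s+1 = 2 + q + q^2 + \cdots + q^d$ we have $s+1 \equiv 2 \pmod{q}$, so $\gcd(s+1,q) = \gcd(2,q)$. This invites a split into two cases depending on the parity of $q$.

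In the case $q$ odd, $\gcd(s+1, q^{d-1}) = 1$, and the only way $s+1$ can divide $2q^{d-1}$ is if $s+1 \mid 2$. Since $s+1 \geq 2 + q \geq 5$, this is impossible. In the case $q = 2^r$ with $r \geq 2$ (forced by $q > 2$), all terms $q, q^2, \dots, q^d$ are divisible by $4$, so $s+1 \equiv 2 \pmod{4}$, i.e.\ $v_2(s+1) = 1$. But $2q^{d-1}$ is a power of $2$, so $s+1 \mid 2q^{d-1}$ forces $s+1 = 2$, again contradicting $s+1 \geq 2+q \geq 6$.

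The only mildly tricky point is noticing upfront that the messy expression $q^{d-1}s(s-1)$ collapses to $2q^{d-1}$ modulo $s+1$; after that the case split on the parity of $q$ (combined with $q>2$ ensuring $q \geq 4$ in the even case so that $v_2(s+1)=1$) finishes the argument in a few lines. No deep obstacle is anticipated.
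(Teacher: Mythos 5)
Your proof is correct and follows essentially the same route as the paper's: both arguments reduce the question to whether $s+1$ divides $2q^{d-1}$ and then split on the parity of $q$, using $\gcd(s+1,q)=\gcd(2,q)$ and the fact that $s+1\equiv 2\pmod 4$ when $q$ is an even prime power greater than $2$. Your opening observation that $s(s-1)\equiv 2\pmod{s+1}$ is a slightly cleaner way to reach that reduction than the paper's stepwise removal of the factors $s$ and $s-1$, but the substance is the same.
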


\begin{proof}
Suppose, for a contradiction, that $s+1$ divides $q^{d-1}s(s-1)$. Since $\gcd(s+1,s)=1$, this implies that 
\begin{equation}\label{divides}
s+1 \text{ divides } q^{d-1}(s-1). 
\end{equation}
Note that 
\begin{align}\label{s+1}
s+1=2+q+q^2+ \cdots +q^d.
\end{align}
\begin{description}
\item[Case 1: $q$ is odd.] We have $\gcd(s+1,q)=1$ from \eqref{s+1} and then \eqref{divides} implies that $s+1$ divides $s-1$. This is a contradiction because $s > 1$.
\item[Case 2: $q > 2$ is a power of 2.] 
Then $s$ is odd and so $\gcd(s+1,s-1)=2$. Then \eqref{divides} implies that $s+1$ divides $2q^{d-1}$. This is a contradiction because $s+1$ is not a power of 2, by \eqref{s+1}, whereas $2q^{d-1}$ is a power of 2. 
\end{description}
\end{proof}

\begin{lemma}\label{determined2}
Let $d$ be a positive integer and let $s = \frac{3^{d+1}-1}{2}$. Then $s$ does not divide $3^{d-1}(s+1)(s+2)$. 
\end{lemma}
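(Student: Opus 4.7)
The proof would parallel the structure of Lemma~\ref{determined}, but is actually considerably cleaner. The plan is to first show that $\gcd(s,3)=1$, so that the factor $3^{d-1}$ can be stripped from the hypothetical divisibility; then reduce $(s+1)(s+2)$ modulo $s$ to produce a numerical obstruction.

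First I would observe that $2s = 3^{d+1}-1$, so $2s \equiv -1 \pmod 3$, giving $s \equiv 1 \pmod 3$; hence $\gcd(s,3)=1$. Consequently, if $s$ divided $3^{d-1}(s+1)(s+2)$, then $s$ would divide $(s+1)(s+2)$.

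Next I would reduce modulo $s$: $(s+1)(s+2) \equiv 1\cdot 2 = 2 \pmod s$, so $s \mid (s+1)(s+2)$ forces $s \mid 2$, i.e.\ $s \in \{1,2\}$. Finally, for $d \ge 1$ we have $s = \frac{3^{d+1}-1}{2} \ge \frac{3^2-1}{2} = 4$, which contradicts $s \le 2$.

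I do not anticipate any real obstacle here; the entire argument is a coprimality observation followed by a one-line modular reduction. In fact, this case is simpler than that of Lemma~\ref{determined}, where the parity of $q$ forced a case split, because here $s$ is automatically coprime to the characteristic $3$ and the two factors $s+1$ and $s+2$ reduce to the small, fixed residues $1$ and $2$ modulo $s$.
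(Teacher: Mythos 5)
Your proof is correct and follows essentially the same route as the paper's: establish $\gcd(s,3)=1$, strip the coprime factors, and reduce modulo $s$ to get a small-divisor contradiction (the paper first removes $s+1$ via $\gcd(s,s+1)=1$ and concludes $s \mid s+2$ with $s>2$, while you reduce $(s+1)(s+2)\equiv 2 \pmod s$ directly; this is an immaterial difference). No gaps.
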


\begin{proof}
Suppose, for a contradiction, that $s$ divides $3^{d-1}(s+1)(s+2)$. Since $\gcd(s,s+1)=1$, this implies that $s$ divides $3^{d-1}(s+2)$. Writing $s=1+3+3^2+ \cdots + 3^d$ shows that $\gcd(s,3)=1$ and we therefore deduce that $s$ divides $s+2$. This is a contradiction because $s>2$. 
\end{proof}

\begin{proposition}[\cite{mcfarland-noncyclic}]\label{HYPE}
Let $H_i$ and  $H_j$ be hyperplanes of a vector space $V$ of dimension $d+1$ over $\GF$$(q)$. Then in the group ring $\Z[V]$,
\begin{align*}
H_iH_j=
\begin{cases} 
      \hfill q^{d} H_i   \hfill & \text{ if } H_i = H_j, \\
      \hfill q^{d-1} V \hfill &\text{ if } H_i \ne H_j. \\
  \end{cases}
  \end{align*}
\end{proposition}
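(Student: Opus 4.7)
The plan is to evaluate the coefficient of each $v \in V$ in the group-ring product $H_i H_j$ by a direct double-counting argument. Writing $[v]\,H_iH_j$ for this coefficient, I would first observe that
\[
[v]\,H_i H_j = \bigl|\{(h_1,h_2) \in H_i \times H_j : h_1 h_2 = v\}\bigr|.
\]
A short manipulation shows that any two representations $v = h_1 h_2 = h_1' h_2'$ satisfy $h_1^{-1} h_1' = h_2 h_2'^{-1} \in H_i \cap H_j$, so the set of representations of a fixed $v$ is in bijection with $H_i \cap H_j$ whenever it is nonempty. Hence $[v]\,H_iH_j$ equals $|H_i \cap H_j|$ if $v$ lies in the product set $H_iH_j$, and $0$ otherwise.

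Then I would split into the two cases using a dimension count. When $H_i = H_j$, the product set is just $H_i$ and $|H_i \cap H_j| = q^d$, yielding $H_iH_i = q^d H_i$ at once. When $H_i \neq H_j$, both are $d$-dimensional subspaces of the $(d+1)$-dimensional space $V$; since $H_i + H_j$ properly contains $H_i$ it must equal $V$, and the standard dimension formula forces $\dim(H_i \cap H_j) = 2d - (d+1) = d-1$, so $|H_i \cap H_j| = q^{d-1}$. Every element of $V$ therefore carries the same coefficient $q^{d-1}$, giving $H_iH_j = q^{d-1} V$.

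There is no real obstacle: the argument is entirely elementary, resting only on the bijection between representations of $v$ and $H_i \cap H_j$, together with the dimension formula for sums of subspaces. The choice of additive versus multiplicative notation for the elementary abelian group $V$ is immaterial, and the statement does not even need the hyperplanes to arise from the specific isomorphism $\phi$ used in Theorem~\ref{McFarland} or Theorem~\ref{Spence}.
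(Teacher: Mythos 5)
Your proof is correct. The paper states Proposition~\ref{HYPE} only as a cited result from McFarland's paper and supplies no proof of its own, so there is nothing to compare against; your argument --- counting representations of each $v$ via the bijection with $H_i \cap H_j$, then applying the dimension formula to get $|H_i \cap H_j| = q^{d-1}$ and $H_i + H_j = V$ in the distinct case --- is the standard elementary derivation and is complete.
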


We can now prove Theorems \ref{NoMcFarland} and~\ref{NoSpence}.

\begin{proof}[Proof of Theorem \ref{NoMcFarland}]
Let $G$ be a group containing a central subgroup $E$ of index $s+1$ isomorphic to the elementary abelian group of order $q^{d+1}$. Let $f_0, f_1, \dots , f_s$ and $g_0, g_1, \dots, g_s$ each be a set of coset representatives for $E$ in $G$. Let $H_1, H_2, \dots, H_s$ be the subgroups of $G$ corresponding to the hyperplanes of $E$, under an isomorphism~$\phi$, when $E$ is regarded as a vector space of dimension $d+1$ over $\GF$$(q)$ and let 
\begin{align*}
D_1= \sum_{i=1}^s f_i H_i \quad \text{ and }  \quad D_2 = \sum_{i=1}^s g_i H_i.
\end{align*}
Suppose, for a contradiction, that there are integers $\mu,\nu$ such that 
\begin{align}\label{eq1}
D_1D_2^{(-1)}= (\mu-\nu) D+ \nu G  \quad \text{ in } \Z[G]
\end{align}
for some difference set $D$ (having the same parameters $(v,k,\lambda,n)$ as $D_1, D_2$) in~$G$. By Lemma~\ref{MuNuReduced}, 
\begin{align}\label{McMuNu}
\nu = q^{d-1} \frac{s (s \pm 1)}{s+1} \quad \text{ and} \quad \mu = \nu \mp q^d.
\end{align}
By Lemma \ref{determined} we cannot take the lower signs in \eqref{McMuNu}, and so \eqref{eq1} becomes
\begin{equation}\label{pluggedin}
D_1D_2^{(-1)}= -q^d D+ q^{d-1} s G.
\end{equation}

Now, $E$ is a central subgroup containing each $H_i$, and $H_i = H_i^{(-1)}$, so 
\begin{align*}
D_1D_2^{(-1)} &= \sum_{i=1}^s f_i H_i \sum_{j=1}^s(g_jH_j)^{(-1)}\\
&= \sum_{1 \le i,j \le s} f_i g_j^{-1}H_iH_j \\
&= \sum_{i=1}^s f_ig_i^{-1} (q^d H_i) + \sum_{\stackrel{1\le i,j \le s}{i \ne j}} f_i g_j^{-1} (q^{d-1} E),
\end{align*}
by separating into sums over $i=j$ and $i \ne j$, and using Proposition~\ref{HYPE}. Substitute into \eqref{pluggedin} and reduce modulo $q^d$ to obtain
\begin{align*}
q^{d-1} \sum_{\stackrel{1\le i,j \le s}{i \ne j}} f_i g_j^{-1} E \equiv q^{d-1}sG \pmod{q^d} \quad \text{ in } \Z[G].
\end{align*}
Therefore
\begin{align}\label{countfromhere}
 \sum_{\stackrel{1\le i,j \le s}{i \ne j}} f_i g_j^{-1} E \equiv sG \pmod{q} \quad \text{ in } \Z[G].
\end{align}

Let $K = \{k_0, k_1, \dots, k_s \}$ be a set of coset representatives for $E$ in $G$. Each $g \in G$ may be uniquely represented as $k_t e$ for some $k_t \in K$ and some $e \in E$, from which we define a projection map $\rho: G \rightarrow E$ by
\begin{align*}
\rho(k_t e) = e \quad \text{ for } k_t \in K \text{ and } e \in E. 
\end{align*}
The map $\rho$ induces a projection from $\Z[G]$ to $\Z[E]$.
 For each distinct $i,j$, write $f_i g_j^{-1} \in G$ uniquely as 
 \begin{align*}
 f_i g_j^{-1} = k_{t(i,j)} e_{i,j} \quad \text{ where } k_{t(i,j)} \in K \text{ and } e_{i,j} \in E,
 \end{align*}
 and write $G = \sum_{t=0}^s \sum_{e \in E} k_t e$ so that \eqref{countfromhere} becomes
\begin{align}
 \sum_{\stackrel{1\le i,j \le s}{i \ne j}} \sum_{e \in E} k_{t(i,j)} e_{i,j} e  \equiv s \sum_{t=0}^s  \sum_{e \in E} k_t e \pmod{q} \quad \text{ in } \Z[G].
\end{align}
Apply $\rho$ to both sides to give 
\begin{align}
 \sum_{\stackrel{1\le i,j \le s}{i \ne j}} \sum_{e \in E} e_{i,j} e  \equiv s \sum_{t=0}^s  \sum_{e \in E} e \pmod{q} \quad \text{ in } \Z[E].
\end{align}
Using $\sum_{e\in E} e_{i,j} e= e_{i,j}E=E$ then gives 
\begin{align*}
s(s-1)E \equiv s(s+1)E \pmod{q} \quad \text{ in } \Z[E].
\end{align*}
Compare the coefficient of $\bone_E$ on both sides to give 
\begin{align*}
s(s-1) \equiv s(s+1) \pmod{q}.
\end{align*}
Since $s=1+q+q^2+ \cdots + q^d$, this implies 
\begin{align*}
0 \equiv 2 \pmod{q},
\end{align*}
which is a contradiction because $q>2$. 
\end{proof}

\begin{proof}[Proof of Theorem \ref{NoSpence}]
The proof is similar to that of Theorem \ref{NoMcFarland}. We use the same strategy of expanding the product $D_1D_2^{(-1)}$, taking a modular reduction, and taking a projection map. We highlight the places in which additional care is needed.

Let $G$ be a group containing a central subgroup $E$ of index $s$ isomorphic to $\Z_3^{d+1}$. Let $f_1, \dots , f_s$ and $g_1, \dots, g_s$ each be a set of coset representatives for $E$ in $G$. Let $H_1, H_2, \dots, H_s$ be the subgroups of $G$ corresponding to the hyperplanes of $E$ when $E$ is regarded as a vector space of dimension $d+1$ over $\GF$$(3)$ and let 
\begin{align*}
D_1=f_1(E-H_1)+ \sum_{i\ne 1} f_i H_i, \quad \text{ and } \quad D_2 =g_m(E-H_m)+ \sum_{j \ne m} g_j H_j,
\end{align*}
where $D_1$ involves the complement in $E$ of subgroup $H_1$, and $D_2$ involves the complement in $E$ of subgroup $H_m$: we must examine both the cases $m=1$ and $m \ne 1$. 
Suppose, for a contradiction, that there are integers $\mu,\nu$ such that 
\begin{align}\label{eq1SECOND}
D_1D_2^{(-1)}= (\mu-\nu) D+ \nu G  \quad \text{ in } \Z[G]
\end{align}
for some difference set $D$ (having the same parameters $(v,k,\lambda,n)$ as $D_1, D_2$) in~$G$. 
By Lemma~\ref{MuNuReduced}, 
\begin{align}\label{SpenceMuNu}
\nu = 3^{d-1} \frac{(s+1) (s+1 \pm 1)}{s} \quad \text{ and} \quad \mu = \nu \mp 3^d.
\end{align}
By Lemma \ref{determined2} we cannot take the upper signs in \eqref{SpenceMuNu}, and so \eqref{eq1SECOND} becomes
\begin{equation}\label{pluggedinSpence}
D_1D_2^{(-1)}= 3^d D+ 3^{d-1} (s+1) G.
\end{equation}
Substitute for $D_1$ and $D_2$, and reduce modulo $3^d$ to give 
\begin{align}\label{SpenceStar}
\left( f_1(E-H_1) + \sum_{i \ne 1} f_i H_i \right) \left( g_m^{-1}(E-H_m) + \sum_{j \ne m} g_j^{-1} H_j \right) 
\equiv 3^{d-1} (s+1)G \pmod{3^d} \text{ in } \Z[G].
\end{align}
By Proposition~\ref{HYPE}, 
\begin{align*}
H_i E \equiv EE \equiv H_i H_i \equiv 0 \pmod{3^d} \text{ in } \Z[G]
\end{align*}
and so we need retain on the left hand side of \eqref{SpenceStar} only those terms involving $H_iH_j$ for distinct~$i,j$. 

\begin{description}
\item[Case 1: $m=1$.] 
By Proposition~\ref{HYPE}, the congruence \eqref{SpenceStar} becomes 
\[
 -3^{d-1} \sum_{j \ne 1} f_1 g_j^{-1} E -3^{d-1} \sum_{i \ne 1} f_i g_1^{-1} E+ 3^{d-1} \sum_{\stackrel{2\le i,j \le s}{i \ne j}} f_i g_j^{-1} E 
 \equiv 3^{d-1}(s+1) G  \pmod{3^d} \text{ in } \Z[G]. 
\]
Applying a projection map $\rho$ from $G$ to $E$ as in the proof of Theorem \ref{NoMcFarland}, we deduce that 
\begin{align*}
-(s-1)E-(s-1)E+(s-1)(s-2)E \equiv (s+1)sE  \pmod{3} \text{ in } \Z[E]. 
\end{align*}
Since $s \equiv 1 \pmod{3}$, this gives the contradiction 
\begin{align*}
0 \equiv 2 \pmod{3}.
\end{align*}

\item[Case 2: $m \ne 1$.]
By Proposition~\ref{HYPE}, the congruence \eqref{SpenceStar} becomes 
\begin{align*}
& 3^{d-1} f_1 g_m^{-1}E- 3^{d-1} \sum_{j \ne 1,m} f_1 g_j^{-1}E-3^{d-1} \sum_{i \ne 1,m} f_i g_m^{-1}E + 
3^{d-1} \sum_{\stackrel{1\le i,j \le s}{i \ne 1, j \ne m, i \ne j}} f_i g_j^{-1} E \\
&\hspace{85mm} \equiv 3^{d-1}(s+1) G  \pmod{3^d} \text{ in } \Z[G],
\end{align*}
which after projection gives 
\begin{align*}
E-(s-2)E-(s-2)E+\left((s-1)^2-(s-2) \right)E \equiv (s+1)s E \pmod{3}  \text{ in } \Z[E].
\end{align*}
Since $s \equiv 1 \pmod{3}$, this gives the contradiction 
\begin{align*}
1 \equiv 2 \pmod{3}.
\end{align*}
\end{description}
\end{proof}

\section{Constructions in 2-groups using group difference matrices}
\label{sec:construction}

In this section we present a powerful construction (Theorem \ref{Main}) of reduced linking systems of difference sets in $2$-groups. The construction, which is not restricted to abelian groups,  combines combinatorial properties of hyperplanes (Proposition~\ref{HYPE}) with the unexpected use of group difference matrices. 

Theorem~\ref{Main} has several consequences.
Corollaries~\ref{general} and~\ref{improved} construct infinite families of reduced linking systems of difference sets in abelian groups, simplifying and extending some of the previous results given in Theorem~\ref{thm-dmp}.
Corollary~\ref{TYKEN} constructs an infinite family of examples in nonabelian groups, whereas not a single nonabelian example was previously known. 
Corollary~\ref{cor-nonrev} constructs an infinite family of nonreversible examples generalizing Example~\ref{FromDMP}. 
Finally, we show in Section \ref{subsec-Z42} that the construction produces all possible examples of maximum size in the group $\Z_4^2$, and allows significant control over which difference sets in the reduced linking system are reversible. 
\subsection{Group difference matrices}\label{DMs}

We first introduce group difference matrices; see \cite{colbourn-diffmatrices} for a survey of the topic.

\begin{definition}\label{DMdef}
Let $G$ be a group of order $v>1$. A \emph{$(G, m, \lambda)$-difference matrix} is an $m \times \lambda v$ matrix $(b_{i,j})$ with $0 \le i \le m-1$ and $0 \le j \le \lambda v-1$ and each entry $b_{i,j} \in G$ such that, 
for all distinct rows $i$ and $r$, the multiset $\{ b_{i,j} b_{r,j}^{-1} : 0 \le j \le \lambda v-1 \}$ contains every element of $G$ exactly $\lambda$ times.
\end{definition}

We shall be interested only in the case $\lambda=1$ of Definition \ref{DMdef}, so that for each distinct $i,r$ the set $\{b_{i,j} b_{r,j}^{-1} : 0 \le j \le v-1 \}$ contains every element of $G$ exactly once. We can right-multiply all entries of a column of a $(G,m,1)$-difference matrix by a fixed $a \in G$ without changing the defining property of the matrix, because $(b_{i,j}a)(b_{r,j}a)^{-1}= b_{i,j} b_{r,j}^{-1}$. By right-multiplying all entries of each column $j$ by $b_{0,j}^{-1}$, we may therefore assume that each entry of row $0$ of the matrix is $\bone_G$. The difference property of the matrix then implies that, for each $i \ge 1$, the set 
$\{ b_{i,j} : 0 \le j \le v-1\}$ contains every element of $G$ exactly once. We can likewise right-multiply all entries of each row by $b_{i,0}^{-1}$, so that each entry of column 0 of the matrix is $\bone_G$. By considering the entries of column 1 of the matrix and using the pigeonhole principle, we see that the largest number of rows of a $(G,m,1)$-difference matrix is~$\vert G \vert$. 

\begin{example}\label{DMEX}
Let $G=\Z_2^2 = \langle x,y \rangle$. The matrix
\begin{align*} 
(b_{i,j}) = 
\begin{pmatrix}
\bone_G & \bone_G	& \bone_G	& \bone_G \\
\bone_G & x		& y 		& xy \\
\bone_G & y 		& xy 		& x \\
\bone_G & xy 		& x 		& y
\end{pmatrix}
\end{align*}
 is a $(\Z_2^2,4,1)$-difference matrix. 
\end{example}

We shall make use of the following two constructive results for difference matrices in abelian $2$-groups. 

\begin{theorem}[Pan and Chang {\cite[Lemma~3.4]{pan-chang}}]\label{generalDM}
Let $G$ be an abelian noncyclic $2$-group. Then there exists a $(G,4,1)$-difference matrix. 
\end{theorem}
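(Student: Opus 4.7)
The plan is to reformulate the statement in terms of orthomorphisms of $G$, and then construct the required difference matrix by exhibiting a compatible pair. Any $(G, 4, 1)$-difference matrix with $G$ abelian can be normalized, as in the discussion preceding Example~\ref{DMEX}: right-multiplying each column and then each row by suitable elements of $G$ (a valid operation in the abelian setting) forces the first row and first column to consist entirely of $\bone_G$; the second row then enumerates $G$, and by reordering the non-first columns I may arrange for column $g$ to contain $g$ in the second row. Writing $G$ additively, the last two rows are permutations $\sigma, \tau$ of $G$ fixing $0$, and the defining property becomes: each of $\sigma$, $\tau$, $\sigma - \mathrm{id}$, $\tau - \mathrm{id}$, and $\tau - \sigma$ is a permutation of $G$. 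Equivalently, $\sigma$ and $\tau$ are \emph{orthomorphisms} of $G$ (meaning $\sigma$ and $x \mapsto \sigma(x) - x$ are both permutations) whose pointwise difference is also a permutation.

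Given this reformulation, I would first handle the elementary abelian case $G = \Z_2^t$ with $t \geq 2$: choose $M \in \mathrm{GL}_t(\GF(2))$ such that $M - I$ is also invertible (e.g., the companion matrix of a degree-$t$ irreducible polynomial over $\GF(2)$, which has no root equal to $1$ since $t \geq 2$), and set $\sigma(x) = Mx$, $\tau(x) = M^2 x$. Linearity delivers the five required permutation conditions, since $M(M - I)$ is also invertible. To cover a general abelian noncyclic $G = \Z_{2^{a_1}} \times \cdots \times \Z_{2^{a_t}}$, I would invoke the direct-product construction for difference matrices: the Cartesian product of a $(G_1, 4, 1)$-difference matrix with a $(G_2, 4, 1)$-difference matrix is a $(G_1 \times G_2, 4, 1)$-difference matrix, since the two coordinates of each row-pair difference run independently through $G_1$ and $G_2$. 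This reduces the problem to base cases in which $G$ has small rank and at least one cyclic factor of exponent exceeding $2$.

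The main obstacle is the construction in these mixed-exponent base cases, typified by $G = \Z_{2^a} \times \Z_{2^b}$ with $a > b \geq 1$ (for example $\Z_4 \times \Z_2$), where no purely linear orthomorphism exists. The reason is that every homomorphism $\Z_{2^b} \to \Z_{2^a}$ has image inside $2^{a-b}\Z_{2^a}$ and so vanishes modulo~$2$; hence every endomorphism of $G$ reduces modulo~$2$ to a lower-triangular matrix on $G/2G \cong \Z_2^2$, and among invertible lower-triangular $2 \times 2$ matrices over $\GF(2)$ the difference-with-identity is always singular. Overcoming this requires an explicit nonlinear orthomorphism, typically defined piecewise on the cosets of $2G$ in $G$. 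Producing two such orthomorphisms with $\tau - \sigma$ bijective, and verifying this compatibility condition, is the technical core of the argument.
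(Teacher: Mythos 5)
This statement is quoted from Pan and Chang \cite[Lemma~3.4]{pan-chang}; the paper gives no proof of it, so there is nothing internal to compare against. Judged on its own, your proposal has a genuine gap. The reformulation is sound (a normalized $(G,4,1)$-difference matrix over abelian $G$ is exactly a pair of orthomorphisms $\sigma,\tau$ fixing $0$ with $\tau-\sigma$ bijective), the elementary abelian case via $\sigma=M$, $\tau=M^2$ for $M$ a companion matrix of an irreducible polynomial is correct, and your mod-$2$ obstruction showing that no endomorphism of $\Z_{2^a}\times\Z_{2^b}$ with $a>b$ can be an orthomorphism is also correct. But you then declare the construction of the required \emph{nonlinear} orthomorphisms in the mixed-exponent case to be ``the technical core of the argument'' and stop. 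That core is precisely the content of the lemma: without an explicit construction for, say, $\Z_4\times\Z_2$ (and its relatives), no instance outside the elementary abelian and equal-exponent cases is actually proved.

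The reduction step is also weaker than you claim. The product construction requires \emph{every} factor in the chosen decomposition to admit a $(G_i,4,1)$-difference matrix, and no nontrivial cyclic $2$-group does (for $\Z_2$ this is immediate from $m\le|G|$; for larger cyclic $2$-groups it follows from the classical nonexistence of $(G,3,1)$-difference matrices over groups with nontrivial cyclic Sylow $2$-subgroup). Hence you can never split off a cyclic factor by itself, and a noncyclic group of rank $3$ --- e.g.\ $\Z_8\times\Z_4\times\Z_2$ --- admits no decomposition into two noncyclic parts. Your ``base cases'' therefore include all rank-$2$ and rank-$3$ groups with mixed exponents, an infinite family for which you supply no construction. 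To complete the argument you would need either explicit orthomorphism pairs for these groups (as Pan and Chang in effect provide) or a different recursion, e.g.\ one that builds a $(\Z_{2^{a+1}}\times H',4,1)$-difference matrix from a $(\Z_{2^a}\times H,4,1)$-difference matrix, rather than a direct product of two complete solutions.
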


\begin{theorem}[Buratti {\cite[Theorem~2.11]{buratti}}]\label{Buratti}
Let $G$ be an abelian group of order $2^{d+1}$ and exponent $2^e$. Then there exists a 
$(G,2^{\left\lfloor \frac{d+1}{e} \right\rfloor},1)$-difference matrix. 
\end{theorem}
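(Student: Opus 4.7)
The plan is to construct the difference matrix via Galois-ring arithmetic. Set $k = \lfloor (d+1)/e \rfloor$, so the goal is a $(G, 2^k, 1)$-difference matrix.

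First I would treat the homocyclic case $G = \Z_{2^e}^k$, corresponding to $d+1 = ek$. Here $G$ is the additive group of the Galois ring $R = \mathrm{GR}(2^e, k)$, a finite local ring of order $2^{ek}$ whose residue field is $\GF(2^k)$. The Teichmüller set $T \subset R$, consisting of $0$ together with the $(2^k-1)$-st roots of unity in $R$, has size $|T| = 2^k$, and any two distinct elements of $T$ have unit difference in $R$ because their reductions in $\GF(2^k)$ are distinct. Enumerate $T = \{t_0, \ldots, t_{2^k-1}\}$ and the additive group $R = \{r_0, \ldots, r_{2^{ek}-1}\}$, and set $b_{i,j} := t_i \cdot r_j$. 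For distinct rows $i, i'$, the map $j \mapsto (t_i - t_{i'}) r_j$ is left-multiplication by a unit, hence a bijection of $R$, so the column-wise differences $b_{i,j} - b_{i',j}$ run through every element of $R \cong G$ exactly once, giving the required $(\Z_{2^e}^k, 2^k, 1)$-difference matrix.

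For the general case $G \cong \Z_{2^{a_1}} \times \cdots \times \Z_{2^{a_t}}$ with $a_1 = e$ and possibly some $a_i < e$, the natural strategy is to combine smaller difference matrices via a product construction: given a $(G_1, m, 1)$-difference matrix $(b^{(1)}_{i,j})$ and a $(G_2, m, 1)$-difference matrix $(b^{(2)}_{i,l})$ with the same number of rows, the matrix whose column indexed by $(j,l)$ is $(b^{(1)}_{i,j}, b^{(2)}_{i,l})$ is a $(G_1 \times G_2, m, 1)$-difference matrix. One would then aim to split off a $\Z_{2^e}^k$ direct summand, apply the Galois-ring construction there, and build the remaining factor separately. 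As a sanity check, the case $k=2$ reduces to Theorem~\ref{generalDM} of Pan and Chang.

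The main obstacle is that $G$ need not split off a $\Z_{2^e}^k$ summand — for instance $\Z_4 \times \Z_2^2$ has exponent $4$ and $k=2$, yet contains only one cyclic factor of order $4$. Even when such a summand exists, the complementary summand has strictly smaller exponent, so its own Galois-ring construction yields fewer than $2^k$ rows and the naive product construction stalls. I expect the heart of Buratti's argument to lie in realizing $G$ as a suitable $\mathrm{GR}(2^e, k)$-submodule of a free module that is preserved by the action of the full Teichmüller set $T$, so that the Galois-ring construction applies directly to $G$ with left multiplication by $T$ producing all $2^k$ rows; the nontrivial task is to exhibit such a module structure on every abelian $2$-group of order $2^{d+1}$ and exponent $2^e$.
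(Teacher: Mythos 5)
First, a caveat: the paper does not prove this statement at all --- Theorem~\ref{Buratti} is imported as a black box from \cite{buratti} --- so your proposal can only be judged on its own terms. On those terms, your homocyclic case is correct and is indeed the standard construction: for $G=\Z_{2^e}^k$ the matrix $b_{i,j}=t_i r_j$ over $R=\mathrm{GR}(2^e,k)$ works exactly as you say, since distinct Teichm\"uller elements reduce to distinct elements of the residue field and hence differ by a unit; the direct-product composition you quote is also correct. The genuine gap is the general case, which you yourself flag as unresolved, and the route you propose for closing it cannot work. If $G$ carried a $\mathrm{GR}(2^e,k)$-module structure (and being a $T$-stable submodule of a free module is no weaker a condition, since any submodule is closed under multiplication by $T$), then $G$ would be a direct sum of modules $\mathrm{GR}(2^j,k)\cong\Z_{2^j}^k$, so every elementary divisor of $G$ would occur with multiplicity divisible by $k$. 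Your own example $\Z_4\times\Z_2^2$ (order $2^4$, exponent $4$, $k=2$) violates this, so no such module structure exists there, yet the theorem asserts a $(G,4,1)$-difference matrix for that group. The obstruction kills the entire ``make $G$ a Galois-ring module'' strategy, not just one implementation of it.

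The missing idea --- the reason Buratti's paper is titled ``Recursive constructions\dots'' --- is that $\lambda=1$ difference matrices compose along group \emph{extensions}, not only direct products: if $N\le G$, if $(b_{i,j})$ is an $(N,m,1)$-difference matrix, and if $(\tilde c_{i,l})$ lifts a $(G/N,m,1)$-difference matrix to coset representatives in $G$, then $a_{i,(j,l)}:=b_{i,j}\,\tilde c_{i,l}$ is a $(G,m,1)$-difference matrix, because for fixed $l$ the differences $b_{i,j}b_{i',j}^{-1}$ sweep out $N$ inside the coset determined by $\tilde c_{i,l}\tilde c_{i',l}^{-1}$, and varying $l$ sweeps out the cosets. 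Writing $G=\prod_j \Z_{2^{a_j}}$ and choosing for each $j$ a subset $S_j\subseteq\{1,\dots,e\}$ of size $a_j$ so that every index is covered at least $k=\left\lfloor\frac{d+1}{e}\right\rfloor$ times (possible since $\sum_j a_j=d+1\ge ke$ and each $a_j\le e$; consecutive cyclic intervals do it), one obtains a filtration $G=G_0>G_1>\dots>G_e=1$ with every quotient elementary abelian of rank at least $k$. Each such quotient carries a $(\Z_2^{s_i},2^k,1)$-difference matrix by your own base construction (delete surplus rows), and iterating the extension lemma gives the claimed $(G,2^k,1)$-difference matrix. So your Galois-ring computation survives as the base case, but the theorem needs the extension lemma and the rebalanced filtration, neither of which is present in your proposal.
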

Note that the case $e=1$ of Theorem \ref{Buratti} gives a $(G,m,1)$-difference matrix for $G= \Z_2^{d+1}$ and $m=2^{d+1}$, which satisfies the extremal condition $m=\vert G \vert$.

\subsection{Main construction theorem}\label{subsecmain}
We next find sufficient conditions for the linking property \eqref{LinkingProperty} to hold for a pair of difference sets in a $2$-group (Lemma \ref{newconstruction}), and then show that the rows of a difference matrix can be used to satisfy these pairwise conditions for a collection of difference sets simultaneously (Theorem \ref{Main}). 
Recall that the parameters of a difference set in a group of order $2^{2d+2}$ are determined by Theorem~\ref{2grp}, and can be regarded as McFarland parameters with $q=2$, and that when $q$ is prime we do not need to specify an isomorphism $\phi$ in Theorem~\ref{McFarland}.

\begin{lemma}\label{newconstruction}
Let $d$ be a nonnegative integer and let $s = 2^{d+1}-1$. Let $G$ be a group of order $2^{2d+2}$ which contains a central subgroup $E$ isomorphic to $\Z_2^{d+1}$. Suppose that $f_0, f_1, \dots f_s$ and $g_0, g_1, \dots, g_{s}$ are each a set of coset representatives for $E$ in $G$ such that 
$f_0g_0^{-1}, f_1g_1^{-1}, \dots, f_sg_s^{-1}$ is also a set of coset representatives for $E$ in~$G$. 
Let $H_1, H_2, \dots, H_s$ be the subgroups of $G$ corresponding to the hyperplanes of $E$ when $E$ is regarded as a vector space of dimension $d+1$ over $\GF$$(2)$. 
Then the sets
\begin{align*}
D_1 = \sum_{i=1}^s f_i H_i \quad \text{ and } \quad 
D_2 = \sum_{i=1}^s g_i H_i
\end{align*}
are difference sets in $G$ satisfying
\begin{equation*}
D_1D_2^{(-1)}= -2^dD+ 2^{d-1}s G \quad \text{ in } \Z[G],
\end{equation*}
where 
\begin{align}\label{EasyD}
D=\sum_{i=1}^s f_i g_i^{-1} (E-H_i)
\end{align}
is also a difference set in~$G$. 
\end{lemma}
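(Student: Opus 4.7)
Since $[G:E] = 2^{d+1} = s+1$ and $E$ is central and isomorphic to $\Z_2^{d+1}$, the hypotheses of Theorem~\ref{McFarland} with $q=2$ are met, so $D_1$ and $D_2$ are $(v,k,\lambda,n)$-difference sets in~$G$. The hypothesis that $\{f_i g_i^{-1}\}_{i=0}^{s}$ is a transversal forces the cosets $f_i g_i^{-1} E$ for $1 \le i \le s$ to be pairwise distinct, so the supports $f_i g_i^{-1}(E - H_i)$ are disjoint and $D$ is an honest subset of $G$ of size $s \cdot 2^d = k$.

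The heart of the argument is the identity $D_1 D_2^{(-1)} = -2^d D + 2^{d-1} s G$. I expand
\[
D_1 D_2^{(-1)} = \sum_{i,j=1}^{s} f_i H_i H_j g_j^{-1} = \sum_{i,j=1}^{s} f_i g_j^{-1} H_i H_j,
\]
where the second equality uses that $H_i H_j$ lies in the central subgroup~$E$ and therefore commutes with $g_j^{-1}$. Proposition~\ref{HYPE} with $q=2$ then replaces $H_i H_i$ by $2^d H_i$ and $H_i H_j$ (for $i \ne j$) by $2^{d-1} E$, splitting the right-hand side into a diagonal contribution $2^d \sum_{i=1}^s f_i g_i^{-1} H_i$ and an off-diagonal contribution $2^{d-1} \sum_{i \ne j} f_i g_j^{-1} E$.

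The crucial step is simplifying the off-diagonal sum. I repeatedly exploit that each of the three families $\{f_i E\}_{i=0}^s$, $\{g_j^{-1} E\}_{j=0}^s$, and $\{f_i g_i^{-1} E\}_{i=0}^s$ enumerates the cosets of $E$ in $G$ exactly once, so each sums to $G$ in $\Z[G]$; the third of these is where the transversal hypothesis on $\{f_i g_i^{-1}\}_{i=0}^{s}$ enters, while the first two follow because $E$ is normal. Pulling $f_i$ through $\sum_{j=1}^s g_j^{-1} E = G - g_0^{-1} E$ and then using the first coset identity collapses $\sum_{i,j=1}^{s} f_i g_j^{-1} E$ to $(s-1) G + f_0 g_0^{-1} E$, and subtracting the diagonal contribution $\sum_{i=1}^{s} f_i g_i^{-1} E = G - f_0 g_0^{-1} E$ leaves the off-diagonal sum as $(s-2) G + 2 f_0 g_0^{-1} E$. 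Rewriting $-2^d D + 2^{d-1} s G$ using the identity $\sum_{i=1}^s f_i g_i^{-1} E = G - f_0 g_0^{-1} E$ once more produces the same expression, establishing the desired identity.

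To conclude that $D$ is itself a $(v,k,\lambda,n)$-difference set, I apply the ``squaring'' trick used in the proof of Lemma~\ref{MuNuReduced}: multiply the identity on the right by $D_2 D_1^{(-1)}$, use $D_2^{(-1)} D_2 = D_1 D_1^{(-1)} = n \bone_G + \lambda G$ to reduce one side, expand $(-2^d D + 2^{d-1} s G)(-2^d D^{(-1)} + 2^{d-1} s G)$ on the other side, and compare coefficients of $\bone_G$ and $G$ to extract $D D^{(-1)} = n \bone_G + \lambda G$. Lemma~\ref{DsetProp} then certifies $D$ as a difference set. The main obstacle is the bookkeeping of the boundary term $f_0 g_0^{-1} E$: it must emerge from the off-diagonal sum with precisely the same coefficient that appears when $-2^d \sum_{i=1}^s f_i g_i^{-1} E$ is rewritten as $-2^d G + 2^d f_0 g_0^{-1} E$, and this matching is exactly what the transversal hypothesis on $\{f_i g_i^{-1}\}_{i=0}^{s}$ is engineered to guarantee.
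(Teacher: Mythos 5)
Your proposal is correct. The core of the argument --- expanding $D_1D_2^{(-1)}$ via centrality of $E$ and Proposition~\ref{HYPE}, splitting into diagonal and off-diagonal parts, and collapsing the off-diagonal sum to $(s-2)G+2f_0g_0^{-1}E$ using the three transversal identities $\sum_{i=1}^s f_iE=G-f_0E$, $\sum_{j=1}^s g_j^{-1}E=G-g_0^{-1}E$, and $\sum_{i=1}^s f_ig_i^{-1}E=G-f_0g_0^{-1}E$ --- is exactly the computation in the paper. Where you diverge is the final claim that $D$ is a difference set. The paper observes that since $H_i$ has index $2$ in $E\cong\Z_2^{d+1}$, the complement $E-H_i$ equals $a_iH_i$ for some $a_i\in E$, so $D=\sum_i f_ig_i^{-1}a_iH_i$ is again a McFarland difference set by Theorem~\ref{McFarland} applied to the transversal $\{f_ig_i^{-1}a_i\}$; this is a one-line structural argument and makes transparent that $D$ has the same form as $D_1$ and $D_2$. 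You instead multiply the identity by $D_2D_1^{(-1)}=(D_1D_2^{(-1)})^{(-1)}$ and use $D_2^{(-1)}D_2=n\bone_G+\lambda G$ to solve for $DD^{(-1)}$, invoking Lemma~\ref{DsetProp}; this works (and your preliminary check that $D$ is an honest $k$-subset, via disjointness of the cosets $f_ig_i^{-1}E$, is exactly the hypothesis Lemma~\ref{DsetProp} needs), but it costs a parameter verification that the coefficient of $G$ comes out to $\lambda$, and it does not reveal that $D$ is itself of McFarland type --- a fact the paper exploits when it reuses the explicit form \eqref{EasyD} in later examples. Note also that the paper's index-$2$ trick is precisely the step that fails for $q>2$ (as the paper discusses after the lemma), so your alternative ending is marginally more general in spirit, though the identity itself already breaks down for $q>2$.
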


\begin{proof}
The sets $D_1$ and $ D_2$ are difference sets in $G$, by Theorem \ref{McFarland} with $q=2$.
Since each $H_j \subset E$ is central in $G$, and $H_j^{(-1)} = H_j$, by Proposition~\ref{HYPE} we calculate in $\Z[G]$ that
\begin{align}\label{step1}
 D_1D_2^{(-1)}=\left( \sum_{i=1}^s f_i H_i \right)  \left( \sum_{j=1}^s H_j^{(-1)}g_{j}^{-1}  \right) &=
 \sum_{i=1}^s f_i g_{i}^{-1} (2^{d} H_i) + \sum_{\stackrel{1\le i,j \le s}{i \ne j}} f_i g_{j}^{-1} (2^{d-1} E).
\end{align}
By assumption, each of $\{f_i : 0 \le i \le s\}$ and $\{g_i : 0 \le i \le s\}$ and $\{f_i g_i^{-1} : 0 \le i \le s\}$ is a set of coset representatives for $E$ in $G$ and so 
\begin{align}
\sum_{i=1}^s f_i E &= G-f_0E \nonumber \\
\sum_{i=1}^s g_i^{-1} E &= G-g_0^{-1}E \nonumber \\
\sum_{i=1}^s f_ig_i^{-1} E &= G-f_0g_0^{-1}E. \label{G-E}
\end{align}
Therefore
\begin{align*}
\sum_{\stackrel{1\le i,j \le s}{i \ne j}} f_i g_{j}^{-1}E
 &= \sum_{i=1}^sf_i \sum_{j=1}^s g_j^{-1}E-\sum_{i=1}^s f_i g_i^{-1}E \\
 &= \sum_{i=1}^s f_i(G-g_0^{-1}E) - (G-f_0g_0^{-1}E) \\
 &=sG- (G-f_0E)g_0^{-1}-(G-f_0 g_0^{-1}E )\\
 &=(s-2)G+2f_0g_0^{-1}E.
\end{align*}
Substitute into \eqref{step1} to obtain
\begin{align}
D_1D_2^{(-1)} 
 &=2^d \sum_{i=1}^s f_i g_i^{-1} H_i+ 2^{d-1} \left( (s-2)G+2f_0g_0^{-1}E \right) \label{combinewhen2}\\
 &=-2^d \left(G - \sum_{i=1}^s f_i g_i^{-1}H_i - f_0 g_0^{-1}E \right) + 2^{d-1}sG \nonumber \\
 &=-2^d D + 2^{d-1} sG, \label{2terms}
\end{align}
where by \eqref{G-E} 
\begin{align*}
D=\sum_{i=1}^s f_i g_i^{-1} (E-H_i).
\end{align*}

It remains to show that $D$ is a difference set in $G$. 
For each $i$ we may write $E-H_i=a_i H_i$ for some $a_i \in E$ because $H_i$ has index $2$ in $E \cong \Z_2^{d+1}$, and then
$
D= \sum_{i=1}^s f_i g_i^{-1} a_i H_i.
$
Since  $\{f_i g_i^{-1} : 0 \le i \le s\}$ is a set of coset representatives for $E$ in $G$, so is $\{f_i g_i^{-1}a_i : 0 \le i \le s \}$ and therefore $D$ is a difference set in $G$ by Theorem \ref{McFarland}. 
\end{proof}

Lemma \ref{newconstruction} shows that, subject to conditions on coset representatives, McFarland difference sets $D_1,D_2$ with $q=2$ that are constructed as in Theorem \ref{McFarland} can form a reduced linking system. 
In view of the nonexistence result of Theorem \ref{NoMcFarland} (also for McFarland difference sets with $q=2$ that are constructed as in Theorem~\ref{McFarland}), it is natural to ask where Lemma \ref{newconstruction} fails for $q>2$. There are two such places. Firstly, the expression 
\begin{align*}
2^d \sum_{i=1}^s f_i g_i^{-1} H_i+ 2^{d-1} \left( (s-2)G+2f_0g_0^{-1}E \right)
\end{align*}
in \eqref{combinewhen2} is replaced by
 \begin{align*}
q^d \sum_{i=1}^s f_i g_i^{-1} H_i+q^{d-1} \left( (s-2)G+2f_0g_0^{-1}E \right),
\end{align*}
so that the corresponding expression \eqref{2terms} for $D_1D_2^{(-1)}$ now involves three distinct coefficients $q^d,q^{d-1}s,2q^{d-1}$ (where $s = \frac{q^{d+1}-1}{q-1}$).
Secondly, it is no longer possible to write $E-H_i = a_iH_i$ for some $a_i \in E$, because $H_i$ has now index $q$ in $E$ (where $E$ is isomorphic to the elementary abelian group of order $q^{d+1}$) .

\begin{theorem}\label{Main}
Let $G$ be a group of order $2^{2d+2}$ which contains a central subgroup $E$ isomorphic to $\Z_2^{d+1}$. Let $m \ge 3$ and suppose there exists a  $(G/E, m,1)$-difference matrix. Then $G$ contains a reduced linking system of difference sets of size $m-1$.
\end{theorem}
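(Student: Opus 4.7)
The plan is to apply Lemma~\ref{newconstruction} pairwise to rows of a suitably normalized and lifted difference matrix. The key observation is that the hypothesis of Lemma~\ref{newconstruction}---that three specified lists of group elements each form sets of coset representatives for $E$ in~$G$---is exactly the pairwise condition in Definition~\ref{DMdef}, and a single $(G/E,m,1)$-difference matrix enforces it simultaneously for all row-pairs. The $m-1$ rows other than row~$0$ will yield the $m-1$ difference sets of the linking system, all sharing the common pair $(\mu,\nu)$ prescribed by Lemma~\ref{MuNuReduced}.

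Let $(\bar b_{r,j})$ be a $(G/E, m, 1)$-difference matrix with rows indexed by $r \in \{0, 1, \dots, m-1\}$ and columns by $j \in \{0, 1, \dots, s\}$, where $s = 2^{d+1} - 1 = |G/E| - 1$. I first normalize: right-multiply column~$j$ by $\bar b_{0,j}^{-1}$ (which leaves each row-difference list unchanged, since $\bar b_{i,j}a_j(\bar b_{r,j}a_j)^{-1}=\bar b_{i,j}\bar b_{r,j}^{-1}$), then left-multiply row~$r$ by $\bar b_{r,0}^{-1}$ (which replaces each row-difference list by a conjugate $c_i(G/E)c_r^{-1}$, still equal to $G/E$ as a set). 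After normalization, row~$0$ and column~$0$ consist entirely of $\bone_{G/E}$. Lift each entry to an element $f^{(r)}_j \in G$ in the coset $\bar b_{r,j}$, taking $f^{(r)}_j = \bone_G$ whenever $\bar b_{r,j} = \bone_{G/E}$. For $r \in \{1, 2, \dots, m-1\}$, set
\[
D_r = \sum_{i=1}^s f^{(r)}_i H_i,
\]
where $H_1, \dots, H_s$ correspond to the hyperplanes of $E$. The difference-matrix condition between rows $0$ and $r$ asserts that $\{f^{(r)}_0, \dots, f^{(r)}_s\}$ is a set of coset representatives for $E$ in $G$, so each $D_r$ is a McFarland difference set by Theorem~\ref{McFarland} with $q = 2$.

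For distinct $r, r' \in \{1, \dots, m-1\}$, the three hypotheses of Lemma~\ref{newconstruction} are checked as follows: the lists $\{f^{(r)}_i\}$ and $\{f^{(r')}_i\}$ are sets of coset representatives by the previous paragraph, and the list $\{f^{(r)}_i (f^{(r')}_i)^{-1}\}$ is a set of coset representatives because $f^{(r)}_i (f^{(r')}_i)^{-1}E = \bar b_{r,i} \bar b_{r',i}^{-1}$, and these quotients run over $G/E$ exactly once by the difference-matrix condition between rows $r$ and $r'$. Lemma~\ref{newconstruction} then yields
\[
D_r D_{r'}^{(-1)} = -2^d\, D(r,r') + 2^{d-1} s\, G \quad \text{in } \Z[G]
\]
for some difference set $D(r,r')$ in $G$. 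This is precisely the linking relation~\eqref{LinkingProperty} with $\nu = 2^{d-1} s$ and $\mu = \nu - 2^d$, so $\{D_1, \dots, D_{m-1}\}$ is a reduced linking system of difference sets of size $m-1$.

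The conceptual step I expect to be the main hurdle is the realization that a difference matrix is exactly the combinatorial object needed to synchronize the pairwise coset-representative hypothesis of Lemma~\ref{newconstruction} across all pairs at once; once this is seen, the proof is essentially a bookkeeping exercise. The only genuinely technical point is the normalization in the possibly nonabelian setting, but both normalization steps act on every row-difference list by a bijection of $G/E$, so the difference-matrix property survives them.
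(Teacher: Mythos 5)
Your proof is correct and follows essentially the same route as the paper: normalize the difference matrix, lift its rows to coset representatives for $E$ in $G$, and feed each pair of rows into Lemma~\ref{newconstruction}, whose common $(\mu,\nu)$ gives the linking property. The only (immaterial) difference is that the paper additionally multiplies each lifted entry by an arbitrary element $e_{i,j}\in E$, extra freedom it exploits later for counting and nonreversibility but which is not needed for the bare existence statement.
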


\begin{proof}
Let $s= 2^{d+1}-1$ and let $H_1,H_2, \dots, H_s$ be the subgroups of $G$ corresponding to the hyperplanes of $E$ when $E$ is regarded as a vector space of dimension $d+1$ over $\GF$$(2)$.
Let the $(G/E,m,1)$-difference matrix be $B=(b_{i,j}E)$ for $0 \le i \le m-1$ and $0 \le j \le s$ and $b_{i,j} \in G$. As noted after Definition~\ref{DMdef} we may assume that, for each nonzero distinct $i$ and $r$, the set 
$\{b_{i,j}E : 0 \le j \le s \}$, as well as the set $\{ b_{i,j}b_{r,j}^{-1}E : 0 \le j \le s \}$, contains every element of $G/E$ exactly once. Therefore, the sets $\{b_{i,j}: 0 \le j \le s \}$ and 
$\{b_{i,j} b_{r,j}^{-1}: 0 \le j \le s \}$ are both a set of coset representatives for $E$ in $G$.
Choose $e_{i,j} \in E$ for each $1 \le i \le m-1$ and $ 1 \le j \le s$ arbitrarily. Since $E$ is central in $G$, it follows that the sets $\{b_{i,j}e_{i,j} :0 \le j \le s \}$ and $\{(b_{i,j} e_{i,j} )(b_{r,j}e_{r,j})^{-1} : 0 \le j \le s \}$ are both a set of coset representatives for $E$ in $G$.

Let
\begin{align}\label{DfromDM}
D_i = \sum_{j=1}^s b_{i,j} e_{i,j} H_j \quad \text{ for } 1 \le i \le m-1. 
\end{align}
We shall show that $\{D_1, D_2, \dots, D_{m-1}\}$ is a reduced linking system of difference sets in~$G$.
By Definition \ref{reduced}, we require that each $D_i$ is a difference set in $G$ and that there are integers $\mu, \nu$ such that, for all distinct nonzero $i$ and $r$,
\begin{equation*}
D_i D_r^{(-1)}= (\mu - \nu)D(i,r) + \nu G
\end{equation*}
for some difference set $D(i,r)$ in~$G$. 
This follows from Lemma \ref{newconstruction} by taking $f_j=b_{i,j}e_{i,j}$ and $g_j= b_{r,j}e_{r,j}$.
\end{proof}

We now give an example of the construction method of Lemma~\ref{newconstruction} and Theorem~\ref{Main} in which $G$ is not an elementary abelian $2$-group. 

\begin{example}
Let $G = \Z_{4} \times \Z_2 \times \Z_2 = \langle x,y,z \rangle$ and let $E = \langle x^2, z \rangle$. 
The subgroups of $G$ corresponding to the hyperplanes of $E$ when $E$ is regarded as a vector space of dimension $2$ over $\GF(2)$ are $H_1 = \langle x^2 \rangle, H_2 = \langle z \rangle, H_3 = \langle x^2 z \rangle$. By Example \ref{DMEX},
 the matrix $(b_{i,j}E)$ is a $(G/E,4,1)$-difference matrix, where
\begin{align*} 
(b_{i,j}) = 
\begin{pmatrix}
\bone_{G} 	& \bone_{G} 	& \bone_{G}	& \bone_{G} \\
\bone_{G}	& x		& y 		& xy \\
\bone_{G} 	& y 		& xy 		& x \\
\bone_{G} 	& xy 		& x 		& y
\end{pmatrix} \text{ for } 0 \le i,j \le 3. 
\end{align*}
Take
\begin{align*} 
(e_{i,j}) = 
\begin{pmatrix}
\bone_E	& \bone_E	& \bone_E \\
z 	& x^2 		& x^2 \\
z 	& \bone_E 	& \bone_E
\end{pmatrix} 
\text{ for } 1 \le i,j \le 3.
\end{align*}
Then \eqref{DfromDM} gives the reduced linking system of difference sets 
\begin{align*}
D_1 &=x  H_1 + y  H_2 + xy H_3 \\
D_2 &=y z H_1 + x^3y H_2 + x^3H_3 \\
D_3 &=xyz H_1 + x H_2 + y H_3.
\end{align*}
Furthermore, we can find the difference set $D(i,j)$ specified in Definition \ref{reduced} directly from $D_i$ and~$D_j$. For example,
\begin{align*}
D_2D_3^{(-1)}= -2D(2,3)+3G,
\end{align*}
where by \eqref{EasyD} we find
\begin{align*}
D(2,3) 
&=(yz)(xyz)^{-1}(E-H_1)+ (x^3y)(x)^{-1}(E-H_2) + (x^3)(y)^{-1}(E-H_3) \\
&=x^3(zH_1) +x^2y (x^2H_2)+ x^3y(zH_3)\\
&= x z H_1 + y H_2 + xy H_3.
\end{align*}

\end{example}

\subsection{Infinite families in abelian groups}\label{subsec-inf}

We now use Theorem \ref{Main} to construct infinite families of reduced linking systems of difference sets in a wide range of abelian $2$-groups. 

\begin{corollary}\label{general}
Let $G$ be an abelian group of order $2^{2d+2}$, rank at least $d+1$, and exponent at most~$2^{d+1}.$ Then $G$ contains a reduced linking system of difference sets of size~$3$.
\end{corollary}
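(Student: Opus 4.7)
The plan is to invoke Theorem \ref{Main} with $m = 4$, which will directly yield a reduced linking system of size $m - 1 = 3$. This reduces the task to producing (i) a central subgroup $E \cong \Z_2^{d+1}$ of $G$ and (ii) a $(G/E, 4, 1)$-difference matrix.

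For (i), the abelian hypothesis makes every subgroup of $G$ central, so I need only exhibit a copy of $\Z_2^{d+1}$ inside $G$. Writing $G = \Z_{2^{a_1}} \times \cdots \times \Z_{2^{a_t}}$ with $a_1 \ge a_2 \ge \cdots \ge a_t \ge 1$, the hypotheses translate to $\sum_{i=1}^t a_i = 2d+2$, $t \ge d+1$, and $a_1 \le d+1$. I would take $E$ to be generated by the unique element of order $2$ in each of the first $d+1$ cyclic factors; this gives $E \cong \Z_2^{d+1}$ as required.

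For (ii), by Theorem \ref{generalDM} it suffices to show that the abelian $2$-group $G/E$ (of order $2^{d+1}$) is noncyclic. With the above choice of $E$,
\[
G/E \;\cong\; \Z_{2^{a_1 - 1}} \times \cdots \times \Z_{2^{a_{d+1} - 1}} \times \Z_{2^{a_{d+2}}} \times \cdots \times \Z_{2^{a_t}},
\]
whose rank equals $|\{i \le d+1 : a_i \ge 2\}| + (t - d - 1)$. I would show this rank is at least $2$ by splitting into three cases: if $t \ge d + 3$ then $t - d - 1 \ge 2$ and we are done; if $t = d + 2$ then the sum condition together with the ordering force at least one $a_i \ge 2$ with $i \le d + 1$ (otherwise $\sum a_i \le d + 2$, which equals $2d + 2$ only when $d = 0$); and if $t = d + 1$ the exponent bound $a_1 \le d + 1$ combined with $\sum a_i = 2d + 2$ rules out the possibility that at most one of the $a_i$ exceeds $1$, so at least two of them must.

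The main technical obstacle is the case $t = d + 1$, where the argument relies crucially on the exponent hypothesis $\exp(G) \le 2^{d+1}$: without this bound, the configuration $a_1 = d + 2$ with all other $a_i = 1$ would produce a cyclic quotient $G/E \cong \Z_{2^{d+1}}$ and undermine step (ii). Once the three cases are dispatched, Theorem \ref{generalDM} supplies the required $(G/E, 4, 1)$-difference matrix, and Theorem \ref{Main} delivers the reduced linking system of difference sets in $G$ of size $3$.
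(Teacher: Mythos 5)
Your proposal is correct and follows essentially the same route as the paper: take $E$ to be the copy of $\Z_2^{d+1}$ inside the first $d+1$ cyclic factors, verify that $G/E$ is noncyclic using the exponent bound, and then apply Theorems~\ref{generalDM} and~\ref{Main}. Your three-case rank computation is simply a more explicit version of the paper's one-line observation that $G/E$ could be cyclic only when the rank is exactly $d+1$ and $(a_1,\dots,a_{d+1})=(d+2,1,\dots,1)$, which the hypothesis $\exp(G)\le 2^{d+1}$ excludes.
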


\begin{proof}
Write $G = \Z_{2^{a_1}} \times \cdots \times \Z_{2^{a_{d+1+t}}}$, where $t \ge 0$ and $a_1 \ge a_2 \ge \cdots \ge a_{d+1+t} \ge 1$ and $\sum_i a_i = 2d+2$. The first $d+1$ direct factors of $G$ contain a subgroup $E$ isomorphic to $\Z_2^{d+1}$, and
\begin{equation*}
G/E \cong \Z_{2^{a_1-1}} \times \cdots \times  \Z_{2^{a_{d+1}-1}} \times \Z_{2^{a_{d+2}}}\times \cdots \times \Z_{2^{a_{d+1+t}}}.
\end{equation*}
Now $G/E$ is cyclic only if $t=0$ and $(a_1-1,a_2-1, \dots ,a_{d+1}-1)=(d+1,0, \dots,0)$, which is excluded by the assumption $\exp(G) \le 2^{d+1}$. Therefore by Theorem \ref{generalDM} there exists a $(G/E,4,1)$-difference matrix and the result follows from Theorem \ref{Main}.
\end{proof}

\begin{corollary}\label{improved}
Let $G$ be an abelian group of order $2^{2d+2}$, rank at least $d+1$, and exponent $2^e$, where 
$2 \le e \le \frac{d+3}{2}$.  
Then $G$ contains a reduced linking system of difference sets of size $2^{ \left\lfloor \frac{d+1}{e-1} \right\rfloor }-1$. 
\end{corollary}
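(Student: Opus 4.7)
The plan is to mirror the proof of Corollary \ref{general}, but apply Buratti's theorem (Theorem \ref{Buratti}) to the quotient $G/E$ in place of Pan--Chang. First I will write $G = \Z_{2^{a_1}} \times \cdots \times \Z_{2^{a_r}}$ with $e = a_1 \ge a_2 \ge \cdots \ge a_r \ge 1$, $r \ge d+1$, and $\sum_i a_i = 2d+2$. Taking $E$ to be the subgroup generated by the unique order-$2$ element in each of the first $d+1$ direct factors gives $E \cong \Z_2^{d+1}$, which is automatically central since $G$ is abelian, and yields
\[
G/E \cong \Z_{2^{a_1-1}} \times \cdots \times \Z_{2^{a_{d+1}-1}} \times \Z_{2^{a_{d+2}}} \times \cdots \times \Z_{2^{a_r}},
\]
a group of order $2^{d+1}$.

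The key step will be to verify that $G/E$ has exponent exactly $2^{e-1}$, so that Buratti's theorem supplies a $(G/E, 2^{\lfloor (d+1)/(e-1) \rfloor}, 1)$-difference matrix. The contribution $a_1 - 1 = e - 1$ from the first block is fine; the possible trouble comes from the tail factors $\Z_{2^{a_{d+2}}}, \ldots, \Z_{2^{a_r}}$ when $r \ge d+2$. I will rule out $a_{d+2} = e$ by observing that it would force $a_1 = \cdots = a_{d+2} = e$, and hence
\[
2d+2 = \textstyle\sum_i a_i \ge (d+2)e,
\]
giving $e \le \frac{2d+2}{d+2} < 2$, contradicting the hypothesis $e \ge 2$. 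Since $a_i$ is nonincreasing, this shows the exponent of $G/E$ is at most $2^{e-1}$, and it is attained by the first factor.

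Having established this, I will invoke Theorem \ref{Buratti} to obtain a $(G/E, m, 1)$-difference matrix with $m = 2^{\lfloor (d+1)/(e-1) \rfloor}$, and then check the hypothesis $m \ge 3$ required by Theorem \ref{Main}: this is equivalent to $\lfloor (d+1)/(e-1) \rfloor \ge 2$, i.e.\ $e \le \frac{d+3}{2}$, which is exactly the upper bound in the statement. Finally, Theorem \ref{Main} produces a reduced linking system of difference sets in $G$ of size $m - 1 = 2^{\lfloor (d+1)/(e-1) \rfloor} - 1$, completing the proof. The main obstacle is really just the bookkeeping in the exponent calculation above; once that is pinned down, the result is a direct composition of Theorem \ref{Main} with Theorem \ref{Buratti}.
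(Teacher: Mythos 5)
Your proposal is correct and follows essentially the same route as the paper's proof: the same decomposition of $G$, the same choice of $E$ inside the first $d+1$ direct factors, the same verification that $\exp(G/E)=2^{e-1}$ (you derive the contradiction from $\sum_i a_i = 2d+2$ directly, the paper phrases it via the all-ones case, but it is the same observation), and the same application of Theorem~\ref{Buratti} followed by Theorem~\ref{Main}. No gaps.
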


\begin{proof}
Write $G = \Z_{2^{a_1}} \times \cdots \times \Z_{2^{a_{d+1+t}}}$, where $t \ge 0$ and $e=a_1 \ge a_2 \ge \cdots \ge a_{d+1+t} \ge 1$ and $\sum_i a_i = 2d+2$. The first $d+1$ direct factors of $G$ contain a subgroup $E$ isomorphic to $\Z_2^{d+1}$, and
\begin{equation*}
G/E \cong \Z_{2^{a_1-1}} \times \cdots \times  \Z_{2^{a_{d+1}-1}} \times \Z_{2^{a_{d+2}}}\times \cdots \times \Z_{2^{a_{d+1+t}}}.
\end{equation*}
Now $\exp(G/E) = \max(2^{a_1-1}, 2^{a_{d+2}})$, and $a_{d+2} \ge a_1$ only if $t=d+1$ and
$a_1=a_2 = \cdots = a_{2d+2} = 1$, which is excluded by the assumption $e \ge 2$. Therefore $\exp(G/E) = 2^{a_1-1}=2^{e-1}$, and so by Theorem \ref{Buratti} there exists a $(G/E, 2^{\left\lfloor \frac{d+1 }{e-1}\right\rfloor}, 1)$-difference matrix. The assumption $e \le \frac{d+3}{2}$ implies that $2^{ \left\lfloor \frac{d+1}{e-1} \right\rfloor } \ge 4$, and the result follows from Theorem \ref{Main}. 

\end{proof}

Table~\ref{Summary} in Section~\ref{sec:introduction} summarizes the constructive results of Corollaries \ref{general} and \ref{improved}, together with that of Corollary~\ref{bent=link}.
Table~\ref{Compare} in Section~\ref{sec:introduction}, and Table~\ref{256} below, illustrate the power of Corollaries \ref{general} and \ref{improved} in comparison with the best previously known results, by considering abelian groups of order 64 and 256, respectively. 
Table \ref{256} does not list those groups of order 256 having exponent 32 or rank at most 3, for which no existence results are currently known.

\begin{table}[h!]
\begin{center}
\caption{Comparison of maximum known sizes of reduced linking systems of difference sets in abelian groups of order $256$.}\label{256}
\vspace{2mm}
\begin{tabular}{c | c | c | c}
	& Previous 	& Current 	&        \\
Group   & maximum 	& maximum	& Source \\
        & known size 	& known size   	&        \\
\hline\\[-3mm]
$\Z_2^8$ & 127 \cite{bey-kyureghyan} & 127 & Bent set (Corollary \ref{bent=link}) \\
$\Z_4 \times \Z_2^6$ & None &  15 & Difference matrix (Corollary \ref{improved}) \\
$\Z_4^2 \times \Z_2^4$ & 3  \cite{davis-martin-polhill} &  15 & Difference matrix (Corollary \ref{improved}) \\
$\Z_4^3 \times \Z_2^2$ & None & 15 & Difference matrix (Corollary \ref{improved}) \\
$\Z_4^4$ & 15 \cite{davis-martin-polhill} & 15 & Difference matrix (Corollary \ref{improved}) \\
$\Z_8 \times \Z_2^5$ & None & {3} & Difference matrix  (Corollary \ref{general})\\
$\Z_8 \times \Z_4 \times \Z_2^3$ & None & {3} & Difference matrix (Corollary \ref{general}) \\
$\Z_8 \times \Z_4^2 \times \Z_2$ & None & {3} & Difference matrix (Corollary \ref{general}) \\
$\Z_8^2 \times \Z_2^2$ & None & 3 &  Difference matrix (Corollary \ref{general})\\
$\Z_{16} \times \Z_2^4$ & None &3 &  Difference matrix (Corollary \ref{general})\\
$\Z_{16} \times \Z_4 \times \Z_2^2$ & None &3 &  Difference matrix (Corollary \ref{general})
\end{tabular}
\end{center}
\end{table}

\subsection{Infinite families in nonabelian groups}\label{subsec-nonab}

We next use Theorem \ref{Main} to construct an infinite family of reduced linking systems of difference sets in nonabelian $2$-groups. No example of a linking system of difference sets in a nonabelian group was previously known. 
\begin{corollary}\label{TYKEN}
Let $d$ be a positive integer, and 
let $D_4$ be the dihedral group of order 8. Let $K$ be an abelian group of order $2^{2d-1}$ and exponent at most 4. Then $G = D_4 \times K$ contains a reduced linking system of difference sets of size $2^{d+1}-1$. 
\end{corollary}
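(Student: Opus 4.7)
The natural strategy is to invoke Theorem~\ref{Main}: one needs to exhibit a central subgroup $E \cong \Z_2^{d+1}$ of $G$ together with a $(G/E, 2^{d+1}, 1)$-difference matrix, and then the theorem delivers a reduced linking system of difference sets in $G$ of size $2^{d+1} - 1$. Note first that $|G| = 8 \cdot 2^{2d-1} = 2^{2d+2}$, so we are in the setting of Theorem~\ref{Main}. To produce the difference matrix I will appeal to Theorem~\ref{Buratti}, which achieves the extremal row count $2^{d+1}$ precisely when $G/E$ is elementary abelian of order $2^{d+1}$. So the whole plan hinges on arranging for $G/E$ to be elementary abelian.

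First, I would construct the central subgroup $E$. Since $K$ is abelian of order $2^{2d-1}$ and exponent at most $4$, we may write $K \cong \Z_4^a \times \Z_2^b$ with $2a + b = 2d-1$, so in particular $a \le d-1$. The squares of $K$ form a subgroup $F \cong \Z_2^a$ contained in the $2$-torsion $K[2] \cong \Z_2^{2d-1-a}$, which has rank at least $d$ by the bound on $a$. I would pick an elementary abelian subgroup $E_K \le K[2]$ of rank exactly $d$ that contains $F$, and set $E = Z(D_4) \times E_K$. Since $Z(D_4) \cong \Z_2$, this $E$ is elementary abelian of rank $d+1$ and is central in $G = D_4 \times K$.

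Second, I would identify the quotient. Because $E$ is a direct product of subgroups in the two factors,
\[
G/E \;\cong\; \bigl(D_4/Z(D_4)\bigr) \times \bigl(K/E_K\bigr) \;\cong\; \Z_2^2 \times (K/E_K).
\]
The condition $F \subseteq E_K$ is exactly what forces $K/E_K$ to have exponent at most $2$ (all squares in $K$ are killed), and since $|K/E_K| = 2^{d-1}$, we get $K/E_K \cong \Z_2^{d-1}$ and hence $G/E \cong \Z_2^{d+1}$. Theorem~\ref{Buratti} with $e = 1$ then yields a $(G/E, 2^{d+1}, 1)$-difference matrix, and since $d \ge 1$ gives $2^{d+1} \ge 4 \ge 3$, Theorem~\ref{Main} produces the desired reduced linking system of size $2^{d+1} - 1$.

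The main obstacle is the simultaneous choice of $E_K$ so that $E$ is elementary abelian of rank $d+1$ \emph{and} $K/E_K$ has exponent at most $2$, since only then does Theorem~\ref{Buratti} yield the full $2^{d+1}$ rows needed to match the claimed size. The bound $a \le d-1$, coming for free from $2a \le 2a+b = 2d-1$, is precisely what reconciles these two requirements: it guarantees that $K[2]$ is large enough to accommodate a rank-$d$ elementary abelian subgroup containing all of $F$.
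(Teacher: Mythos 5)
Your proposal is correct and follows essentially the same route as the paper: take $E = Z(D_4) \times E'$ for a suitable rank-$d$ elementary abelian $E' \le K$, verify $G/E \cong \Z_2^{d+1}$, and chain Theorem~\ref{Buratti} (with $e=1$) into Theorem~\ref{Main}. The only difference is that you spell out why a subgroup $E_K$ with $K/E_K \cong \Z_2^{d-1}$ exists (it must contain the squares of $K$, which is possible because $a \le d-1$), a detail the paper simply asserts.
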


\begin{proof}
The group $K$ has rank at least $d$, and contains a subgroup $E' \cong \Z_2^d$ such that $K / E' \cong \Z_2^{d-1}$. 
Write 
$D_4 = \langle a,b : a^4=b^2=1, \, a^{-1}=bab^{-1} \rangle$.
The center of $D_4$ is $\langle a^2 \rangle \cong \Z_2$, and $D_4/ \langle a^2 \rangle \cong \Z_2^2$. 
Therefore $E= \langle a^2 \rangle \times E'$ is a central subgroup of $G$ isomorphic to $\Z_2^{d+1}$ and 
$G/E \cong \Z_2^{d+1}$.  
By Theorem \ref{Buratti}, there exists a $(G/E,2^{d+1},1)$-difference matrix. The result follows from  Theorem~\ref{Main}.
\end{proof}

We cannot produce a reduced linking system of difference sets of larger size than in Corollary \ref{TYKEN} using the difference matrix construction of Theorem \ref{Main}, because the $(G/E, 2^{d+1},1)$-difference matrix used in its proof satisfies the extremal condition $2^{d+1}= \vert G/E \vert$ (see Section \ref{DMs}).

The technique used in the proof of Corollary~\ref{TYKEN}, of substituting the dihedral group $D_4$ for an abelian group of order~8, appears in other contexts (for example, \cite[Chapter~VI, Remarks~9.10~(b)]{bjl2}).
However, we can produce many further examples of reduced linking systems of difference sets in nonabelian $2$-groups by modifying the construction of Corollary~\ref{TYKEN}. A straightforward variation is to replace $D_4$ by the quaternion group of order~8. More sophisticated examples are readily available: by Theorems~\ref{generalDM} and~\ref{Main}, it is sufficient to find a nonabelian group $G$ of order $2^{2d+2}$ containing a central subgroup $E$ isomorphic to $\Z_2^{d+1}$ for which the factor group $G/E$ is abelian and noncyclic. The software package GAP \cite{GAP4} shows that there are 4 such groups $G$ of order 16, and 49 such groups $G$ of order~64.

\subsection{Infinite nonreversible family}\label{subsec-nonrev}

Recall from Section \ref{DMPresults} that all examples of reduced linking systems of difference sets given in \cite{davis-martin-polhill} are reversible, with the single exception of Example~\ref{FromDMP}. 
We shall generalize Example~\ref{FromDMP} to an infinite family of nonreversible examples.

We first show that Example \ref{FromDMP} can be realized using the construction \eqref{DfromDM} given in the proof of Theorem \ref{Main}. Take $G= \Z_4^2= \langle x,y \rangle$ and $E= \langle x^2,y^2 \rangle$ and $d=1$ and $m=4$, and let $H_1= \langle x^2 \rangle, H_2= \langle y^2 \rangle, H_3= \langle x^2y^2 \rangle$. Then we may represent the difference sets $D_1,D_2,D_3$ of Example~\ref{FromDMP} as
\begin{align*}
D_1 &= xH_1+yH_2+xy^3H_3 \\
D_2 &= xy H_1+xH_2+y^3H_3 \\
D_3 &= y^3H_1+x^3yH_2+xH_3,
\end{align*}
which have the form \eqref{DfromDM} where
\begin{align*} 
(b_{i,j}) = 
\begin{pmatrix}
\bone_{G}	& \bone_{G} 	& \bone_{G}	& \bone_{G}\\
\bone_{G}	& x		& y 		& xy \\
\bone_{G}	& xy 		& x 		& y\\
\bone_{G}	& y 		& xy 		& x
\end{pmatrix} \quad \text{ for } 0 \le i,j \le 3 
\end{align*}
and 
\begin{align*} 
(e_{i,j}) = 
\begin{pmatrix}
\bone_E	& \bone_E 	& y^2\\
\bone_E	& \bone_E 	& y^2 \\
y^2 	& x^2 		& 1 
\end{pmatrix}
 \quad \text{ for } 1 \le i,j \le 3.
\end{align*}
\noindent
This example has the property that $D_1$ is reversible, but neither $D_2$ nor $D_3$ is. 

We now generalize Example \ref{FromDMP}.
\begin{corollary}\label{cor-nonrev}
The group $\Z_4^{d+1}$ contains a reduced linking system of difference sets of size $2^{d+1}-1$, at least one of whose difference sets is not reversible. 
\end{corollary}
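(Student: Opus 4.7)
The plan is to apply Theorem~\ref{Main} to $G=\Z_4^{d+1}$ with the central subgroup $E=\{g^2:g\in G\}\cong\Z_2^{d+1}$, so that $G/E\cong\Z_2^{d+1}$. Theorem~\ref{Buratti} with $e=1$ supplies a $(G/E,2^{d+1},1)$-difference matrix, and Theorem~\ref{Main} then delivers a reduced linking system $\{D_1,\dots,D_{2^{d+1}-1}\}$ of difference sets in $G$ of the required size, each of the form $D_i=\sum_{j=1}^{s}b_{i,j}e_{i,j}H_j$ as in~\eqref{DfromDM}. The work remaining is to show that at least one $D_i$ is non-reversible.

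First I would characterize reversibility. Setting $c_{i,j}:=b_{i,j}e_{i,j}$, the cosets $c_{i,j}H_j$ (for $j=1,\dots,s$) partition $D_i$ because $|D_i|=2^d s$, and likewise the cosets $c_{i,j}^{-1}H_j$ partition $D_i^{(-1)}$. If $D_i=D_i^{(-1)}$, then each element $c_{i,j}$ lies in some unique $c_{i,k}^{-1}H_k$, forcing $b_{i,j}b_{i,k}\in E$; since $G/E$ is $2$-torsion this becomes $b_{i,j}E=b_{i,k}E$, and the fact that the $b_{i,\cdot}E$ are pairwise distinct (from the difference matrix property) forces $k=j$. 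Therefore $c_{i,j}^{2}=b_{i,j}^{2}\in H_j$ for every $j$, using $e_{i,j}^{2}=\bone_G$. The converse is immediate, so reversibility of $D_i$ is equivalent to $b_{i,j}^{2}\in H_j$ for all $j=1,\dots,s$, and in particular does not depend on the choice of $e_{i,j}$.

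The crucial feature of $\Z_4^{d+1}$ is that the squaring map descends to a group isomorphism $\mathrm{sq}:G/E\to E$, since $(ge)^2=g^2$ whenever $e\in E$. Under $\mathrm{sq}^{-1}$, each hyperplane $H_j$ of $E$ pulls back to a hyperplane $\widetilde H_j$ of $G/E$, and the reversibility condition becomes $b_{i,j}E\in\widetilde H_j$ for every $j$.

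To conclude, I would normalize the difference matrix so that both row~$0$ and column~$0$ consist entirely of $\bone_G$. The difference matrix property then forces any two distinct rows to agree in exactly one column, which must be column~$0$; hence for each $j\ge1$ the entries $\{b_{i,j}E:0\le i\le 2^{d+1}-1\}$ are pairwise distinct and exhaust $G/E$, so the entries in rows $i\ge1$ exhaust $G/E\setminus\{E\}$. Since $|\widetilde H_j\setminus\{E\}|=2^d-1$ is strictly less than $2^{d+1}-1$ for $d\ge1$, some row $i\ge1$ must satisfy $b_{i,j}E\notin\widetilde H_j$, so the corresponding $D_i$ is not reversible. The main obstacle is the reversibility equivalence in the second paragraph; the delicate point is that cosets of different $H_j$ can share elements without being equal, but this is resolved by the disjointness of the coset partition of $D_i$ together with the fact that $G/E$ is $2$-torsion.
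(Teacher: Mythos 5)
Your proposal is correct, and while the reduction to Theorem~\ref{Main} via Theorem~\ref{Buratti} (with $E$ the subgroup of squares, so $G/E\cong\Z_2^{d+1}$) is identical to the paper's, your treatment of non-reversibility takes a genuinely different route. The paper makes specific choices --- it fixes $H_1=\langle x_2^2,\dots,x_{d+1}^2\rangle$, arranges $b_{1,1}=x_1$ and $e_{1,1}=\bone_E$, and shows directly that $x_1\in D_1$ but $x_1^3\notin D_1$, the key step being a parity-of-exponents argument ruling out $x_1^3\in b_{1,j}e_{1,j}H_j$ for $j>1$. You instead prove a clean characterization: $D_i$ is reversible if and only if $b_{i,j}^2\in H_j$ for every $j$, equivalently $b_{i,j}E\in\widetilde H_j$ where $\widetilde H_j$ is the hyperplane of $G/E$ obtained by pulling $H_j$ back through the squaring isomorphism $G/E\to E$. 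Your justification of the ``only if'' direction is sound: the cosets $b_{i,j}e_{i,j}H_j$ lie in distinct cosets of $E$ (so they partition $D_i$), the $2$-torsion of $G/E$ identifies $b_{i,k}^{-1}E$ with $b_{i,k}E$, and $e_{i,j}^2=\bone_G$ makes the criterion independent of the $e_{i,j}$. A column-wise pigeonhole on the normalized extremal difference matrix then finishes, since $|\widetilde H_j\setminus\{E\}|=2^d-1<2^{d+1}-1$. Your route is longer but buys more: it shows that at least $2^d$ of the $2^{d+1}-1$ difference sets in the system are non-reversible (for fixed $j\ge 1$ only $2^d-1$ of the nonzero rows can satisfy $b_{i,j}E\in\widetilde H_j$), and it yields an exact reversibility criterion of the kind the paper only probes by example in Section~\ref{subsec-Z42}. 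The paper's argument is shorter and needs nothing beyond the distinctness of the entries within row~$1$.
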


\begin{proof}
Let $G = \Z_4^{d+1} = \langle x_1, x_2 , \dots, x_{d+1} \rangle$ and $E = \langle x_1^2, x_2^2 , \dots, x_{d+1}^2 \rangle \cong \Z_2^{d+1}$. Then $G/E \cong \Z_2^{d+1}$ and by Theorem \ref{Buratti} there is a 
$(G/E, 2^{d+1},1)$-difference matrix $(b_{i,j}E)$, where each $b_{i,j}E$ has the form 
$\prod_{r \in R(i,j)} x_r E$ for some subset $R(i,j)$ of $\{1,2, \dots, d+1\}$. Following the proof of Theorem \ref{Main}, let $s=2^{d+1}-1$ and let $H_1,H_2, \dots, H_s$ be the subgroups of $G$ corresponding to hyperplanes of $E$ when $E$ is regarded as a vector space of dimension $d+1$ over $\GF(2)$. We may take $H_1 = \langle x_2^2, x_3^2, \dots, x_{d+1}^2 \rangle$. As discussed after Definition~\ref{DMdef}, we may also assume that for each $i \ge 1$, the set $\{b_{i,j}E : 0 \le j \le s \}$ contains no repeated element and that $b_{1,1}=x_1$. Now define $D_i$ as in \eqref{DfromDM} for $m=2^{d+1}$, taking $e_{1,1}= \bone_E$. This gives a reduced linking system of difference sets in $G$ of size $2^{d+1}-1$.

We now show that 
\begin{align*}
D_1 = x_1H_1+ \sum_{j=2}^s b_{1,j}e_{1,j}H_j
\end{align*}
is not reversible. Since $x_1 \in D_1$, it is sufficient to show that $x_1^3 \not \in D_1$. Suppose, for a contradiction, that $x_1^3 \in D_1$. Since $x_1^2 \not \in H_1$, this implies that 
$x_1^3 \in b_{1,j}e_{1,j} H_j$ for some $j>1$. 
But $e_{1,j} H_j \subset E = \langle x_1^2, x_2^2 , \dots, x_{d+1}^2 \rangle$, so by considering the parity of the exponent of each $x_r$ in $b_{1,j}E = \prod_{r \in R(1,j)} x_r E$, we conclude that $b_{1,j} \in x_1E$ for some $j>1$. This contradicts that the set $\{b_{1,j}E : 0 \le j \le s \}$ contains no repeated element. 
\end{proof}

\subsection{The group $\Z_4^2$}\label{subsec-Z42}
We further illustrate the strength of the difference matrix construction of Theorem \ref{Main} by examining  reduced linking systems of difference sets in $\Z_4^2= \langle x,y \rangle$ of size 3. 
Let $m=4$ and $E= \langle x^2, y^2 \rangle$ and $H_1 = \langle x^2 \rangle, H_2 = \langle y^2 \rangle, H_3 = \langle x^2 y^2 \rangle$, so that \eqref{DfromDM} becomes
\begin{align}
D_1 &= b_{1,1} e_{1,1} \langle x^2 \rangle + b_{1,2} e_{1,2} \langle y^2 \rangle + b_{1,3} e_{1,3} \langle x^2 y^2 \rangle \nonumber \\ 
D_2 &= b_{2,1} e_{2,1} \langle x^2 \rangle + b_{2,2} e_{2,2} \langle y^2 \rangle + b_{2,3} e_{2,3} \langle x^2 y^2 \rangle \label{ForCounting} \\
D_3 &= b_{3,1} e_{3,1} \langle x^2 \rangle + b_{3,2} e_{3,2} \langle y^2 \rangle + b_{3,3} e_{3,3} \langle x^2 y^2 \rangle. \nonumber
\end{align}

We first show that there are at least $2^{16}$ distinct reduced linking systems $\{D_1,D_2,D_3 \}$ of this form; an exhaustive computer search shows that this accounts for all reduced linking systems of difference sets in $\Z_4^2$ of size 3, and that no larger system exists.

Each $e_{i,j} \in E$ can be chosen arbitrarily, and exactly 2 of the 4 choices for each $e_{i,j}$ give distinct values for the coset $e_{i,j} H_j$. This counts $2^9$ choices. 
The matrices 
\begin{align*} 
(b_{i,j}) = 
\begin{pmatrix}
\bone_{G}	& \bone_{G} 	& \bone_{G}	&\bone_{G} \\
\bone_{G}	& x		& y 		& xy \\
\bone_{G} 	& y 		& xy 		& x \\
\bone_{G} 	& xy	 	& x 		& y
\end{pmatrix}
\text{ and }
(b'_{i,j}) = 
\begin{pmatrix}
\bone_{G} 	& \bone_{G} 	& \bone_{G}	&\bone_{G} \\
\bone_{G}	& x		& xy 		& y \\
\bone_{G} 	& y 		& x	 	& xy \\
\bone_{G} 	& xy 		& y 		& x
\end{pmatrix}
\end{align*}
for $0 \le i,j \le 3$ correspond to $(G/E,4,1)$-difference matrices $(b_{i,j}E)$ and $(b'_{i,j}E)$. We can multiply all entries of a row of either $(G/E,4,1)$-difference matrix by a fixed $a \in \{\bone_{G}E,xE,yE,xyE\}$ without changing the defining property of the difference matrix. This gives $4^3$ possible row multiples for rows $1,2,3$ of each of the matrices $(b_{i,j}E)$ and $(b'_{i,j}E)$, and so counts $2 \cdot 4^3=2^7$ choices. Moreover, we see from \eqref{ForCounting} that each of the resulting 
$2^9 \cdot 2^7=2^{16}$ choices gives a distinct reduced linking system $\{D_1,D_2,D_3 \}$.
(We cannot directly compare this count of reduced linking systems of difference sets with the classification of systems of linked symmetric $(16,6,2,4)$ designs given by Mathon \cite{mathon}, which counts the number of isomorphism classes rather than the number of distinct systems.)

We next consider the reversibility of these $2^{16}$ reduced linking systems of difference sets. We have already seen an example in Section \ref{subsec-nonrev} (namely Example~\ref{FromDMP}) for which exactly one of the three difference sets is reversible. We can readily specify a reduced linking system for which none of the difference sets is reversible, for example by taking 
\begin{align*} 
(b_{i,j}) = 
\begin{pmatrix}
\bone_{G} 	& \bone_{G} 	& \bone_{G}	&\bone_{G} \\
\bone_{G} 	& x		& xy 		& y \\
\bone_{G} 	& y 		& x		& xy \\
\bone_{G} 	& xy	 	& y 		& x
\end{pmatrix}
\text{ and }
(e_{i,j}) \text{ arbitrary}, 
\end{align*}
and a reduced linking system for which all three difference sets are reversible, for example by taking 
\begin{align*} 
(b_{i,j}) = 
\begin{pmatrix}
\bone_{G} 	& \bone_{G} 	& \bone_{G}	&\bone_{G} \\
x 		& \bone_{G}	& y 		& xy \\
y 		& x 		& \bone_{G}	& xy \\
xy	 	& x	 	& y 		& \bone_{G}
\end{pmatrix}
\text{ and }
(e_{i,j}) \text{ arbitrary}. 
\end{align*}

\section{Open problems}
We conclude with some open problems.
\label{sec:open}
\begin{enumerate}[1.]
\item Our main constructive result (Theorem~\ref{Main}) uses difference matrices in $2$-groups as a crucial ingredient. Are there examples of difference matrices with more rows than those specified in Theorems~\ref{generalDM} and~\ref{Buratti}, or in other $2$-groups?
If so, this would immediately give new reduced linking systems of difference sets.

\item Our main constructive result depends on Lemma~\ref{newconstruction}, involving hyperplanes. Following Q4 of Section \ref{overview}, is there a construction for reduced linking systems of difference sets involving another combinatorial object such as a partial difference set?

\item Table~\ref{Summary} extends some of the previous results due to Davis, Martin, and Polhill \cite{davis-martin-polhill} for abelian $2$-groups (Theorem \ref{thm-dmp}), but does not explain all their results. Can the constructive framework of this paper be broadened to do so?

\item There is a recursive construction for difference sets in the five known families whose parameters satisfy $\gcd(v,n)>1$ (namely the Hadamard, McFarland, Spence, Davis-Jedwab, and Chen families) \cite{unify}, \cite{chen-newfam}. Is there an analogous recursive construction for reduced linking systems of difference sets?

\item Q2 of Section \ref{overview} asks whether there is a linking system of difference sets in a non-$2$-group, and this question remains open despite the nonexistence results of Section~\ref{sec:nonexistence} and the constructions recently given by Kodalen~\cite{kodalen}.
Can this question be resolved constructively, or else can its scope be narrowed by finding further nonexistence results similar to Theorems~\ref{NoMcFarland} and~\ref{NoSpence}?

\item We have counted all reduced linking systems of difference sets in the group $\Z_4^2$ having maximum size (Section \ref{subsec-Z42}). Can this counting result be extended to other groups?

\end{enumerate}

\section*{Acknowledgements}
We greatly appreciate Ken Smith's kind assistance in formulating Corollary~\ref{TYKEN}.
We are very grateful to Petr Lisonek and Marni Mishna for their careful and helpful comments on the manuscript.


\end{document}